\date{}
\renewcommand{\uppercasenonmath}[1]{}
   \DeclareMathSizes{\@xipt}{\@xipt}{7}{2}
\numberwithin{equation}{section} \theoremstyle{plain}
\newtheorem*{theorem*}{Main Theorem}
\newtheorem{theorem}{Theorem}[section]
\newtheorem*{corollary*}{Corollary}
\newtheorem{lemma}[theorem]{Lemma}
\newtheorem*{lemma*}{Lemma}
\newtheorem{proposition}[theorem]{Proposition}
\newtheorem*{proposition*}{Proposition}
\newtheorem{remark}[theorem]{Remark}
\newtheorem*{remark*}{Remark}
\newtheorem{example}[theorem]{Example}
\newtheorem*{example*}{Example}
\newtheorem{definition}[theorem]{Definition}
\newtheorem*{definition*}{Definition}
\newtheorem*{conjecture*}{Conjecture}
\newtheorem*{ack*}{ACKNOWLEDGEMENTS}
\newcommand{\bc}{\begin{center}}
\newcommand{\ec}{\end{center}}
\newcommand{\pf}{\noindent\begin {proof}}
\newcommand{\epf}{\end{proof}}
\begin{document}
\newcommand{\Ho}{H}\newcommand{\Tom}{T}
\newcommand{\Eo}[1]{E^{#1}}
\newcommand{\To}[1]{T^{#1}}
\begin{center}
{\large  \bf Boltje-Maisch Resolutions of Specht Modules }

\vspace{0.8cm} {\small \bf  Xingyu Dai$^{a,\;(1)}$, \ \  Fang Li$^{a,\;(2)}$, \ \ Kefeng Liu$^{a,b\;(3)}$}\\
\vspace{0.1cm}
$^a$Center of Mathematical Sciences, Zhejiang University, Zhejiang 310027, China\\
$^b$Department of Mathematics, University of California, Los Angeles, USA\\
E-mail: $(1)$ daixingyu12@126.com;  $(2)$ fangli@zju.edu.cn;  $(3)$ liu@math.ucla.edu

\end{center}

%\begin{figure}[b]
%\rule[-2.5truemm]{5cm}{0.1truemm}\\[2mm]
%{\small }
%\end{figure}
\bigskip
\centerline { \bf  Abstract}
 %\bigskip

In \cite{21}, Boltje and Maisch found a permutation complex of
Specht modules in representation theory of Hecke algebras, which is
the same as the Boltje-Hartmann complex appeared in the
representation theory of symmetric groups and general linear groups.
In this paper we prove the exactness of Boltje-Maisch complex in the
dominant weight case.

\leftskip10truemm \rightskip10truemm
 \noindent
\\
\vbox to 0.3cm{}\\
{\bf Keywords:} Specht Module, Boltje-Maisch complex,  Hecke algebra, Kempf vanishing theorem, Woodcock condition\\
{\bf 2010 Mathematics Subject Classification:} 20G43

 \leftskip0truemm
\rightskip0truemm
\bigskip

\section{\bf Introduction}

In module category of the group algebra $R\mathfrak{S}_r$ over an arbitrary commutative ring $R$,
 Hartmann and Boltje constructed a finite chain complex in \cite{24} for any composition $\lambda$ of a positive integer $r$.
 Almost all factors are constructed by restricted subsets of homomorphisms between permutation modules except the last one,
 which is the dual of Specht module $S^\lambda$.
Partial exactness results of position -1 and 0 about this complex were already achieved
in Hartmann and Boltje's work \cite{24} and a full proof of the exactness was obtained recently in \cite{2} with help of Bar resolution in homology theory and the construction using the Schur functor. In \cite{28} \cite{30}
 \cite{31} \cite{15} \cite{33} \cite{3} \cite{32} \cite{34} \cite{5}  \cite{29} and etc.,
some other permutation resolutions of Specht modules have been established.

In \cite{21}, the construction of the chain complex of $R\mathfrak{S}_r$  was lifted  to a chain complex of modules of the Iwahori-Hecke algebra $\mathcal {H}_r$ with an integral domain $R$ and the original chain complex reproduced by the specialization
$q=1$ and moreover. This work and its method are useful for our discussion here.

The construction of this complex was
completely combinatorial and characteristic free. It was conjectured that this
chain complex is exact whenever $\lambda$ is a partition. Also, some partial exactness has been found in \cite{10}.
In this paper, we find a way to prove this conjecture. In order to prove its exactness in the dominant weight case, we follow the method of \cite{2}, which constructs
 a bar resolution in the Borel subalgebra case and transformed it into the module category of $q$-Schur algebras by induced Functors.

The paper is organized as follows. In section 2, we find an ideal sequence $J_0\supseteq J_1\supseteq\cdots\supseteq0$, and use $J_0$ and $J_1$ to construct a bar resolution of the module in the representations of Borel subalgebra $S^+_R(n,r)$, just as
Ana Paula Santana and Ivan Yudin did in the case of symmetric groups, as in \cite{2}. However, since the proof of vanishing theorem in \cite{5}
failed for the case of Iwahori Hecke algebra's. In section 3, we use different tools to prove the module $R_\lambda$ is $S_R(n,r)\otimes_{S^+_R(n,r)}$-acyclic,
 which was introduced in \cite{20}. After that, we reach the main results Theorem \ref{main1} and Theorem \ref{main2} which give the positive answer of exactness
 of \emph{Boltje-Maisch} given in \cite{21}.

\section{\bf Notations and Quoted Results}
\subsection{Combinatorics}\label{comb}\cite{2}

   For any natural number $m$ we
  denote by $\bar{n}$ the set $\{1,\ldots,n\}$. Given a finite
  set $X$, for each map $\mu$ from $X$ to no negative integers $\mathbb{N}_0$, we can
  define its length $|\mu|:=\sum_{x\in X}\mu_x$, where we realize the map $\mu$ as $\mu(x)=\mu_x$ for any $x\in X$.
  Then, we can write a subset $\Lambda(X;r)$ as $\{\mu:X\rightarrow\mathbb{N}_0|\  |\mu|=r\}$, and get a map from
  the set $X^r$ to $\Lambda(X;r)$ as following:
\begin{eqnarray*}
&&wt:X^r\rightarrow\Lambda(X;r) \\
\text{with}&&wt(u)_x:=\#\{s|u_s=x,s=1,\ldots,r\}.
\end{eqnarray*}

There are we specific cases of the definitions given
above. First, we denote $I(n,r)$ as the set $\bar{n}^r$. The elements of $I(n,r)$
are usually called \emph{multi-indices} in other's work as \cite{2}, which can be denoted by bold
letters such as $\textbf{i}$, where $\textbf{i}:=(i_1,\cdots,i_r)$ for $1\leq i_k\leq n$ and $1\leq k\leq r$.
Also, we can denote the set $\bar{n}\times\bar{n}$ alternatively as $\{(i,j)|1\leq i,j\leq n\}$. Similarly, we can identify the set
$(\bar{n}\times\bar{n})^r$ and $I(n,r)\times I(n,r)$ via the map
$((i_1,j_1),\ldots,(i_r,j_r))\mapsto((i_1,\ldots,i_r),(j_1,\ldots,j_r))$ without any confusions.

For convenience, from now on sets $\Lambda(\bar{n};r)$ and $\Lambda(\bar{n}\times\bar{n};r)$
are denoted by $\Lambda(n,r)$ and $\Lambda(n,n;r)$,
respectively. We can notice that the elements of $\Lambda(n,r)$ are the same as
compositions of $r$ into $n$ parts, which also has same notation as $\Lambda(n,r)$ in many works as in \cite{7}.
On the set $I(n,r)$ we defined the ordering $\leq$ by
$$\textbf{i}\leq \textbf{j}\Longleftrightarrow \  i_1\leq j_1,\  i_2\leq j_2,\ldots,\  i_r\leq j_r,$$
and write $\textbf{i}<\textbf{j}$ if
$\textbf{i}\leq \textbf{j}$ and
$\textbf{i}\neq\textbf{j}$.

\subsection{Symmetric groups and Iwahori-Hecke algebras}

A {\emph{composition}} $\lambda$ of $r$ is a finite sequence of
non-negative integers $(\lambda_1,\lambda_2,\ldots,\lambda_n)$ such
that $|\lambda|=\sum_i\lambda_i=r$. Moreover, there is a partial
order $\unlhd$(resp. $\unrhd$) within compositions of $r$ as: we
denote $\lambda\unlhd\mu$ when
$\sum_{i=1}^k\lambda_i\leq\sum_{i=1}^k\mu_i$(resp.
$\sum_{i=1}^k\lambda_i\geq\sum_{i=1}^k\mu_i$) for all $1\leq k\leq
n$.

Let $\mathfrak{S}_r$ denote the symmetric
group of all permutations of $1,\ldots,r$ with Coxeter generators
$s_i:=(i,i+1)$, and $\mathfrak{S}_\lambda$ the Young
subgroup corresponding to the composition $\lambda$ of $r$. Thus, we
have
\begin{eqnarray*}
\mathfrak{S}_\lambda=\mathfrak{S}_\textbf{a}=\mathfrak{S}_{\{1,\ldots,a_1\}}\times\mathfrak{S}_{\{a_1+1,\ldots,a_2\}}\times\cdots\times\mathfrak{S}_{\{a_{n-1}+1,\ldots,a_n\}},
\end{eqnarray*}
where $\textbf{a}=[a_0,a_1,\ldots,a_n]$ with $a_0=0$ and
$a_i=\lambda_1+\cdots+\lambda_i$ for all $i=1,\ldots,n$. We
denote by $\mathscr{D}_\lambda$ the set of distinguished
representatives of right $\mathfrak{S}_\lambda$-cosets and write
$\mathscr{D}_{\lambda\mu}:=\mathscr{D}_\lambda\cap\mathscr{D}_\mu^{-1}$,
which is the set of distinguished representatives of double cosets
$\mathfrak{S}_\lambda\setminus\mathfrak{S}_r/\mathfrak{S}_\mu$.

As usual one identifies composition $\lambda$ with {\em{Young
diagram}} and we say that $\lambda$ is the \emph{shape} of the
corresponding Young diagram. For example, we can represent the
partition $(3,2)$ as $\tiny\yng(3,2)$. A $\lambda$-tableau is a
filling of the $r$ boxes of the Young diagram of $\lambda$ of the
numbers $1,2,\ldots,r$. We denote the set of $\lambda$-tableaux by
$\mathcal {T}(\lambda)$ and usually denote an element of
$\mathcal {T}(\lambda)$ as $\mathfrak{t}$.

The group $\mathfrak{S}_r$ acts from the right on $\mathcal {T}(\lambda)$ by simply applying an element $w\in \mathfrak{S}_r$ to the entries of the tableau $\mathfrak{t}\in \mathcal {T}(\lambda)$. This action is free and transitive, and it yields a bijection
\begin{eqnarray*}
\mathfrak{S}_r \xrightarrow{\sim}\mathcal {T}(\lambda), \qquad w\mapsto t^\lambda w.
\end{eqnarray*}

A $\lambda$-tableau $\mathfrak{t}$ is called \emph{row-standard} if its entries are increasing in each row form left to right. The row-standard tableaux form a subset $\mathcal{T}^{rs}(\lambda)$
of $\mathcal{T}(\lambda)$. Two $\lambda$-tableaux $\mathfrak{t}_1$ and $\mathfrak{t}_2$ are called \emph{row-equivalent} if $\mathfrak{t}_1$ and $\mathfrak{t}_2$ can arise from each other by rearranging elements within each row.
We denote the row-equivalent class of $\mathfrak{t}$ by $\{\mathfrak{t}\}$ and the set of row equivalent classes by $\overline{\mathcal {T}}(\lambda)$.
One has the canonical bijections
\begin{eqnarray}
\mathscr{D}_\lambda\xrightarrow{\sim}\mathcal{T}^{rs}(\lambda)\xrightarrow{\sim}\overline{\mathcal{T}}(\lambda)
\end{eqnarray}
given by $d\mapsto\mathfrak{t}^\lambda d$ and $\mathfrak{t}\mapsto \{\mathfrak{t}\}$.

\begin{definition}
Let $R$ be a commutative domain with 1 and let $q$ be a unitary element of $R$. The \emph{Iwahori-Hecke algebra} $\mathcal {H}_r=\mathcal{H}_{R,q}(\mathfrak{S}_r)$ of $\mathfrak{S}_r$
is the unital associative $R$-algebra with generators $T_1,T_2,\ldots,T_{r-1}$ and relations:
\begin{eqnarray*}
(T_i-q)(T_i+1)&=&0,\qquad\qquad\qquad\qquad \qquad \text{for}\  i=1,2,\ldots,r-1,\\
T_iT_j&=&T_jT_i, \qquad\qquad\qquad\qquad\  \   \text{for}\  1\leq i<j-1\leq r-2,\\
T_iT_{i+1}T_i&=&T_{i+1}T_iT_{i+1},\qquad \qquad\qquad \text{for}\  i=1,2,\ldots,r-2.
\end{eqnarray*}
$\mathcal {H}_r$ is a free $R$-module with a finite $R$-basis $\{T_w|w\in \mathfrak{S}_r\}$. $T_w$ is defined as $T_{i_1}T_{i_2}\cdots T_{i_s}$ if $w$ has a reduced presentation $w=s_{i_1}s_{i_2}\cdots s_{i_s}$. Then, we put $x_\mu:=\sum_{w\in \mathfrak{S}_\mu}T_w$ for any Young subgroup $\mathfrak{S}_\mu$, which is an element in $\mathcal{H}_r$, and define $M^\mu$ to be the right $\mathfrak{S}_r$-module $x_\mu\mathcal{H}_r$.

\end{definition}

\begin{definition}\label{double}
Fix a non-negative integer $r$, the $q$-{\em{Schur algebra}} is the endomorphism
algebra
\begin{eqnarray*}
S_R(n,r)=\text{End}_{\mathcal{H}_r}\big(\bigoplus\limits_{\mu\in \Lambda(n,r)}M^\mu\big)=\bigoplus\limits_{\lambda,\mu\in\Lambda(n,r)}\text{Hom}_{\mathcal{H}_r}(M^\mu,M^\lambda).
\end{eqnarray*}

\end{definition}

For any $\lambda, \mu \in\Lambda(n,r)$,
each element $d$ of the set
$\mathscr {D}_{\lambda\mu}:=\mathscr{D}_\lambda\cap\mathscr{D}_\mu^{-1}$ is a representative of the double cosets $\mathfrak{S}_\lambda\setminus\mathfrak{S}_r/\mathfrak{S}_\mu$.
 By \cite{10},
$\mathscr{D}_{\lambda\mu}$ forms an $R$-basis for $Hom_{\mathcal{H}_r}(M^\mu,M^\lambda)$, where $\psi^d_{\lambda\mu}$ is defined by
\begin{eqnarray}
\psi^d_{\lambda\mu}(x_\mu)=\sum\limits_{w\in \mathfrak{S}_\lambda d \mathfrak{S}_\mu} T_w=x_\lambda\sum\limits_{e\in \mathcal{D}_\nu\cap\mathfrak{S}_\mu}T_{de}=x_\lambda T_d\sum\limits_{e\in \mathcal{D}_\nu\cap\mathfrak{S}_\mu}T_e,
\end{eqnarray}
for any parameter $d\in \mathscr{D}_{\lambda\mu}$,
 and where $\nu\in \Lambda(n,r)$ is determined by Young subgroup $\mathfrak{S}_\nu := d^{-1}\mathfrak{S}_\lambda d\cap\mathfrak{S}_\mu$.

We can identify the set $\mathscr {D}_{\lambda\mu}$ with a combinatorial set $\mathcal {T}(\lambda,\mu)$,
which is call a set of generalized tableaux of $\lambda$ with content $\mu$.
Each element $T\in \mathcal {T}(\lambda,\mu)$ can be realized as
Young diagram of composition $\lambda$ whose boxes are filled
with $r$ positive integers where $\mu_1$ entries equal to 1, $\mu_2$ entries equal to 2, etc.

We denote $T^\lambda_\mu$ by the generalized tableau whose boxes are filled in the natural order,
which is the stabilizer of left group action $(wT)(i):=T(iw)$.
 If two generalized tableaux $T_1, T_2$ arised form each other by rearranging the entries within the rows,
  We call they \emph{row-equivalent}
 , i.e., $T_2=T_1 w$ for some $w\in \mathfrak{S}_\lambda$.
 We denote the set of row equivalent classes by $\overline{\mathcal{T}}(\lambda,\mu)$.
Furthermore, we can achieve a bijection $\mathfrak{S}_\lambda\setminus\mathfrak{S}_r/\mathfrak{S}_\mu\xrightarrow{\sim}\overline{\mathcal{T}}(\lambda,\mu)$, $\mathfrak{S}_\lambda w \mathfrak{S}_\mu\mapsto {wT^\lambda_\mu}$.

A generalized tableau $T\in \mathcal{T}(\lambda,\mu)$ is said to be \emph{row-semistandard} if its entries are in the natural order from left to right. Write by $\mathcal{T}^{rs}(\lambda,\mu)$ to denote the set of all such tableau. Every row-equivalent class possesses a unique row-semistandard element. It follows that $\mathcal{T}^{rs}\xrightarrow{\sim}\overline{\mathcal {T}}(\lambda,\mu), T\mapsto\{T\}$, is
a bijection. Therefore, we have reached a canonical bijection:
\begin{eqnarray}\label{bijection}
\mathscr {D}_{\lambda\mu}\xrightarrow{\sim}\mathfrak{S}_\lambda\setminus\mathfrak{S}_r/\mathfrak{S}_\mu\xrightarrow{\sim}\overline{\mathcal{T}}(\lambda,\mu)\xleftarrow{\sim}\mathcal{T}^{rs}(\lambda,\mu)
\end{eqnarray}

Moreover, {Theorem 4.7 of \cite{7} shows that
$\text{Hom}_\mathcal{H}(M^\mu,M^\lambda)$ is free as an $R$-module with
basis $\{\psi^d_{\lambda\mu}|d\in\mathscr{D}_{\lambda\mu}\}$. The
$q$-Schur algebra $S_R(n,r)$ also can be written as a free
$R$-module $\bigoplus\limits_{\substack{\lambda,\mu\in
\Lambda(n,r)\\ d\in
\mathscr{D}_{\lambda\mu}}}R\psi_{\lambda\mu}^d$.

\subsection{Quantized enveloping algebras}
In order to show the vanishing theorem in next section, we need some
notations about quantum groups here. Recall the definition of the
quantized enveloping algebra in the version given by Kashiwara.

Let $A=(a_{i,j})_{i,j\in I}$ be a
finite-type \emph{Cartan matrix}. Which means: fix a sequence of positive integers $(d_i)_i\in I$ such
that $d_ia_{i,j}=d_ja_{j,i}$ for all $i,j\in I$.
In addition, we need it satisfy the structure of root datum as in \cite{26}:

(i)  A perfect pairing $<,>:P^*\times P\rightarrow\mathbb{Z}$, where $P$ and $P^*=\text{Hom}(P,\mathbb{Z})$ are finitely generated free $\mathbb{Z}$-modules.

(ii)  Linearly independent subsets $\{\alpha_i|i\in I\}$ of $P$ and $\{\alpha_i^\vee|i\in I\}$ of $P^+$, satisfying $a_{i,j}=<\alpha_i^\vee,\alpha_j>$ for all $i$ and $j$.

$P$ and $P^*$ are called the lattices of \emph{weight} and \emph{coweights}, respectively; the $\alpha_i$ are the \emph{simple roots}, the $\alpha^\vee_i$ are the \emph{simple coroot}.
The \emph{dominance order} on $P$ is defined by $\lambda\geq\mu$ if and only if $\lambda-\mu$ can be written as a sum of simple roots. A weight $\lambda$ is \emph{dominant} (resp. \emph{antidominant}) if all $<\alpha_i^\vee,\lambda>$ are nonnegative (resp. nonpositive).

\begin{definition}
Let $U$ be the $\mathbb{Q}(q)$-algebra with generators $e_i,f_i,q^h$, $1\leq i \leq n$, $h\in P^*$, and relations
\begin{eqnarray*}
q^0&=&1, \qquad \qquad q^hq^{h'}=q^{h+h'},\\
q^he_i&=&q^{<h,\alpha_i>}e_i q^h,\\
q^hf_i&=&q^{-<h,\alpha_i>}f_iq^h,\\
\left[e_i,f_j\right]&=&\delta_{i,j}\frac{t_i-t_i^{-1}}{q_i-q_i^{-1}},\\
\sum\limits_{l=0}\limits^{a}(-1)^le_i^{(l)}e_je_i^{(a-l)}&=&\sum\limits_{l=0}\limits^{a}(-1)^lf_i^{(l)}f_jf_i^{(a-l)}=0,\\
 \text{where}\  i\neq j,\  a=1-a_{i,j}.
\end{eqnarray*}
For convenience, we have set the following abbreviations in above relations: $t_i=q^{d_i\alpha_i^{\vee}}$, $q_i=q^{d_i}$, $e^{(l)}_i=e_i^l/[l]^!_i$, $f^{(l)}_i=f_i^l/[l]^!_i$.
The subscript $i$ in $[l]^!_i$ means that the $q$ in the definition of
$[l]!$ is replaced by $q_i$.

\end{definition}

Take subset $J\subseteq I$. Set three set which consist of elements of $U$:
\begin{eqnarray*}
\mathscr{E}_J&=&\{e_i^{(s)}|s\geq 0, i\in J\}.\\
\mathscr{F}_J&=&\{f_i^{(s)}|s\geq 0, i\in J\}.\\
\mathscr{H}&=&\{q^h,\left[\begin{array}{c} q^h;0\\s  \end{array}\right]\} \qquad \text{where} \quad
\left[\begin{array}{c}x;c\\ t\end{array}\right]=\prod\limits_{s=1}\limits^t\frac{xq^{(c-s+1)}-x^{-1}q^{-(c-s+1)}}{q^s-q^{-s}}.
\end{eqnarray*}

We write $\mathscr{A}$ for the ring $\mathbb{Z}[q,q^{-1}]$ of
integral Laurent polynomials in the indeterminate $q$.
Let $U^-_\mathscr{A}(J)$, $U^+_\mathscr{A}(J)$ be the $\mathscr{A}$-subalgebras of $U$ generated respectively by $\mathscr{F}_J$, $\mathscr{E}_J$.
Similarly, we can define these subalgebra structure over any arbitrary commutative ring $R$ and denote it by $U_R^+(J)$ or $U_R^-(J)$, if there exist a
with ring homomorphism from $\mathscr{A}$ to $R$, especially $\mathbb{Q}(q)$ for example.

 For convenience, We usually omit
 the subscript when the algebras are defined over $\mathbb{Q}(q)$,
 Then, we use the following abbreviations:
 $U_\mathscr{A}^\mathfrak{b}=U_\mathscr{A}^-(I)$,
$U_\mathscr{A}^\#=U_\mathscr{A}^+(I)$, and define $U^\mathfrak{b}, U^\#$ (resp. $U_R^\mathfrak{b}, U_R^\#$) similarly with the ground ring $\mathbb{Q}(q)$ (resp. $R$).

Write $U_R$-Int for the category of all integrable
$U_R$-modules $V$. It is known as in Lusztig \cite{17} that
$U$-Int is semisimple with simple modules $\triangle(\lambda)$,
$\lambda\in P^+$, where $\triangle(\lambda)$ is the unique
maximal integrable quotient of \emph{Verma module} $M(\lambda)$.
See \cite{14}, We usually call it \emph{Weyl module} of highest weight $\lambda$.

There is a $\mathbb{Q}(q)$-algebra anti-automorphism $u\mapsto u^\tau$ of $U$ given by
\begin{eqnarray*}
e_i^\tau=f_i,\qquad f_i^\tau=e_i,\qquad (q^h)^\tau=q^h.
\end{eqnarray*}
If $V\in U_R\text{-Int}$, its \emph{contravariant dual} $V^\circ$ is the linear dual
$Hom_R(V, R)$, with its natural right $U_R$-action transferred to the left via $\tau$.
And, we denote some new objects in category $U_R$-Int as $\triangledown_R(\lambda)=\triangle_R(\lambda)^\circ$.

\section{\bf Coordinate ring and bar resolutions}

\begin{definition}\label{schur}\cite{4}
  For a commutative ring $R$, let $R[M_n(q)]$ be the associative algebra
  over $R$ generated by $X_{ij}$ with $1\leq i,j\leq n$ such that
\begin{equation}\label{sdual}
\left\{
\begin{array}{cc}
X_{ij}X_{ik}=qX_{ik}X_{ij},&if\   j>k,\\
X_{ji}X_{ki}=X_{ki}X_{ji},&if\   j>k,\\
X_{ij}X_{rs}=q^{-1}X_{rs}X_{ij},&if\   i>r,j<s,\\
X_{ij}X_{rs}-X_{rs}X_{ij}=(q^{-1}-1)X_{is}X_{rj},&if\   i<r, j<s
\end{array}
\right.
\end{equation}
As an $R$-module, $R[M_n(q)]$ has a basis
$\{\prod\limits_{ij}X^{t_{ij}}_{ij}|t_{ij}\in\mathbb{Z}^+\}$, where
the products are formed with respect to any fixed order of the
$X_{ij}$'s. Let $A_q(n,r)$ be the $r$th homogeneous component of
$R[M_n(q)]$. Then $A_q(n,r)$ has a basis
\begin{eqnarray*}
\{X_{\lambda\mu}^d := X_{\textbf{\emph{i}}_\lambda
d,\textbf{\emph{i}}_\mu}|\lambda,\mu\in\Lambda(n,r),\
d\in\mathscr{D}_{\lambda\mu}\},
\end{eqnarray*}
where $\mathscr{D}_{\lambda\mu}$ denotes the set of distinguished
representatives for $\mathfrak{S}_\lambda
\setminus \mathfrak{S}_r/ \mathfrak{S}_\mu$ (see Definition \ref{double}), and
$X_{\textbf{\emph{i}}\textbf{\emph{j}}}=X_{i_1,j_1}X_{i_2,j_2}\cdots
X_{i_r,j_r}$ if $\textbf{\emph{i}}=(i_1,\cdots, i_r)$ and
$\textbf{\emph{j}}=(j_1,\cdots,j_r)$. Denote by $A_q(n,r)^*$ the
linear dual of $A_q(n,r)$. Then, by \cite{4}
$$\varphi:\  End_{\mathcal {H}_r}(\bigoplus\limits_{\lambda\in\Lambda(n,r)}x_\lambda\mathcal{H}_r)\cong A_q(n,r)^*,$$
where the natural basis for $q$-Schur algebra $ \text{End}_{\mathcal
{H}_r}(\bigoplus\limits_{\lambda\in\Lambda(n,r)}x_\lambda\mathcal{H}_r)$
is given as follows:

 For $\lambda,\mu \in \Lambda(n,r), d\in
\mathscr{D}_{\lambda\mu}$,
if we identify $\psi_{\lambda\mu}^d$ with its images under the
isomorphism above. The basis $\{\psi_{\lambda\mu}^d\}$ is the dual
of the basis $\{X_{\lambda\mu}^d:= X_{\textbf{\emph{i}}_\lambda
d,\textbf{\emph{i}}_\mu}|\lambda,\mu\in\Lambda(n,r),\
d\in\mathscr{D}_{\lambda\mu}\}$ for $A_q(n,r)$. Moreover by
\cite{4}, we have
$\varphi(\psi^d_{\lambda\mu})(X_{\rho\nu}^{d_1})=\delta_{\lambda\rho}\delta_{\mu\nu}\delta_{d,d_1}$.
Sometimes we denote the basis
$\{\varphi(\psi_{\lambda\mu}^d)|\lambda,\mu\in \Lambda(n,r),\ d\in
\mathscr{D}_{\lambda\mu}\}\subset A_q(n,r)^*$ as
$\{\psi_{\textbf{\emph{i}}_\lambda
d,\textbf{\emph{i}}_\mu}|\lambda,\mu\in \Lambda(n,r),\ d\in
\mathscr{D}_{\lambda\mu}\}$.
\end{definition}
\begin{remark}
\textbf{Combination correspondence:}

By using the notations of section \ref{comb}, we can identify the
following three sets:
\begin{eqnarray}\label{combiso}
\Xi:\bigsqcup\limits_{\lambda,\mu\in\Lambda(n;r)}
\mathscr{D}_{\lambda\mu}&\rightarrow&I(n,r)\times I(n,r)/\sim
\quad\rightarrow \Lambda(n,n;r)\\
d\in \mathscr{D}_{\lambda\mu}&\mapsto&
\qquad\overline{(\textbf{\emph{i}}_\lambda
d,\textbf{\emph{i}}_\mu)}\qquad\mapsto\qquad wt(\textbf{\emph{i}}_\lambda
d,\textbf{\emph{i}}_\mu)
\end{eqnarray}
Where we put $I(n,r)\times I(n,r)$ as a quotient set of $I(n,r)\times I(n,r)$ with following relation
$$(\textbf{\emph{i}},\textbf{\emph{j}})\sim (\textbf{\emph{i}}',\textbf{\emph{j}}') \Longleftrightarrow   wt(\textbf{\emph{i}},\textbf{\emph{j}})=(\textbf{\emph{i}}',\textbf{\emph{j}}'),$$
\end{remark}
and the map $wt$ is defined in subsection \ref{comb}.
\begin{remark}\label{remark}
The elements of $\Lambda(n,n;r)$ can be realized as
$n\times n$ matrices of non-negative integers $(\omega_{st})_{s,t}$
with $1\leq s,t\leq n$ such that $\sum_{s,t=1}^n\omega_{st}=r$.

Denote by $\Lambda^s(n,n;r)$ the subset of $\Lambda(n,n;r)$ as
\begin{eqnarray*}
\Lambda^s(n,n;r)=\{\omega\in\Lambda(n,n;r)|\omega_{ij}=0\   \forall\   i>j,  \sum\limits_{1\leq k\leq l\leq n}(l-k)\omega_{kl}\geq s\}.
\end{eqnarray*}
Under the identification in \ref{combiso}, we simply find that
\begin{enumerate}
\item $(\textbf{\emph{i}},\textbf{\emph{j}})$ satisfies that $\textbf{\emph{i}}\geq\textbf{\emph{j}}$
   $\Longleftrightarrow$ Put $(\omega_{ij})= wt(\textbf{\emph{i}},\textbf{\emph{j}})$, which means $\omega_{ij}=0\   \forall\   i>j$.

\item Put $(\omega_{ij})= wt(\textbf{\emph{i}}_\lambda d,\textbf{\emph{j}}_\mu)$. There exists a pair $i,j$ such that $\omega_{ij}\neq 0$ $\Longrightarrow$ $\lambda \neq \mu$.
\end{enumerate}
Thus, we can set two definition of subset as following
\begin{eqnarray*}
 \Omega^{\succeq0}:=\Xi^{-1}(\Lambda^0(n,n;r))=\bigsqcup\limits_{\lambda,\mu\in\Lambda(n,r)}\{d\in \mathscr{D}_{\lambda\mu}|  \textbf{\emph{i}}_\lambda d\geq\textbf{\emph{i}}_\mu\}
 :=\bigsqcup\limits_{\lambda,\mu\in\Lambda(n,r)}\Omega_{\lambda\mu}^{\succeq0}.
\end{eqnarray*}
\begin{eqnarray*}
\Omega^{\succeq 1}:=\Xi^{-1}(\Lambda^1(n,n;r))=\{d\in \Omega^{\succeq0}|\lambda,\mu\in \Lambda(n,r),\  \lambda\neq\mu\  and \
d\neq 1\}.
\end{eqnarray*}
Similarly, we can defined a sequence of sets as
$\Omega^\succeq\supseteq\Omega^{\succeq1}\supseteq\Omega^{\succeq2}\supseteq\cdots\Omega^{\succeq
n}\supseteq\cdots$, and defined subspaces of
$S_{R}(n,r)$:
\begin{eqnarray*}
J_n:=\bigoplus\limits_{d\in\Omega^{\succeq
n}}R\psi_{\lambda\mu}^d\subseteq S_{R}^+(n,r) \qquad with\
 d\in\mathscr{D}_{\lambda\mu}.
\end{eqnarray*}
We set $S^+_{R}(n,r):=J_0$, it is {\em{Borel subalgebra}} of
$q$-Schur algebra. This definition is the same as \cite{1}.
$J_1=\bigoplus\limits_{
\psi_{\lambda\mu}^d\neq\psi_{\lambda\lambda}^1}R\psi_{\lambda\mu}^d$
and $J_0\supseteq J_1\supseteq J_2\supseteq\cdots$
\end{remark}
Moreover, if we define subsets of $\Omega^{\succeq n}$ as $\Omega^{\succeq n}_{\lambda\mu}:= \Omega^{\succeq n}\cap \mathscr{D}_{\lambda\mu}$, then, we find trivially that
$\bigsqcup\limits_{\lambda,\mu\in\Lambda(n;r)}\Omega_{\lambda\mu}^{\succeq n}=\Omega^{\succeq n}$. Using this notations, we can state the following lemmas.

\begin{lemma}\label{lemma}
If $d_1\in \Omega^{\succeq n}_{\lambda\mu}$ and $d_2\in
\Omega^{\succeq m}_{\mu\nu}$, then
$\psi_{\lambda\mu}^{d_1}\psi_{\mu\nu}^{d_2}=\sum\limits_{d\in\Omega_{\lambda\nu}^{\succeq
m+n}}a_d\psi_{\lambda\nu}^d$ for some $a_d\in R$.

\end{lemma}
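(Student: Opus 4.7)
The plan is to use the coordinate-ring duality $S_R(n,r) \cong A_q(n,r)^*$ of Definition~\ref{schur} to convert multiplication in $S_R(n,r)$ into a coproduct computation in $A_q(n,r)$ and then read off the weight-matrix degree of each contributing term.

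First I would apply the pairing. Under $\varphi$, each $\psi^d_{\lambda\mu}$ is dual to $X^d_{\lambda\mu} = X_{\mathbf{i}_\lambda d,\,\mathbf{i}_\mu}$, so the product in $S_R(n,r)$ is dual to the bialgebra coproduct $\Delta$ of the quantum matrix algebra, which on generators is $\Delta(X_{ij}) = \sum_{k} X_{ik}\otimes X_{kj}$ and extends multiplicatively modulo the relations (\ref{sdual}). Consequently
\begin{equation*}
a_d \;=\; \bigl(\psi^{d_1}_{\lambda\mu}\otimes \psi^{d_2}_{\mu\nu}\bigr)\bigl(\Delta(X^d_{\lambda\nu})\bigr),
\end{equation*}
and expanding produces a sum
\begin{equation*}
\Delta(X^d_{\lambda\nu}) \;=\; \sum_{\mathbf{k}\in I(n,r)} c_{\mathbf{k}}\, X_{\mathbf{i}_\lambda d,\,\mathbf{k}} \otimes X_{\mathbf{k},\,\mathbf{i}_\nu}
\end{equation*}
with scalars $c_{\mathbf{k}}\in R$ coming from the $q$-commutations. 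The pairing then picks out exactly those $\mathbf{k}$ for which $X_{\mathbf{i}_\lambda d,\,\mathbf{k}}$ and $X_{\mathbf{k},\,\mathbf{i}_\nu}$ reduce to the canonical basis elements $X^{d_1}_{\lambda\mu}$ and $X^{d_2}_{\mu\nu}$, equivalently those $\mathbf{k}$ with $wt(\mathbf{i}_\lambda d,\mathbf{k})=wt(\mathbf{i}_\lambda d_1,\mathbf{i}_\mu)$ and $wt(\mathbf{k},\mathbf{i}_\nu)=wt(\mathbf{i}_\mu d_2,\mathbf{i}_\nu)$.

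Second, I would extract the combinatorial conclusions from the hypothesis. By Remark~\ref{remark}, $d_1\in \Omega^{\succeq n}_{\lambda\mu}$ and $d_2\in \Omega^{\succeq m}_{\mu\nu}$ mean the two intermediate weight matrices are upper triangular of degree at least $n$ and $m$ respectively, which translates to pointwise inequalities between $\mathbf{i}_\lambda d$ and $\mathbf{k}$ and between $\mathbf{k}$ and $\mathbf{i}_\nu$; chaining yields the pointwise comparison of $\mathbf{i}_\lambda d$ and $\mathbf{i}_\nu$ and hence upper triangularity of $\omega = wt(\mathbf{i}_\lambda d,\mathbf{i}_\nu)$. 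The degree bound then comes from the telescoping identity
\begin{equation*}
\sum_{s=1}^{r}\bigl((\mathbf{i}_\nu)_s - (\mathbf{i}_\lambda d)_s\bigr) \;=\; \sum_{s=1}^{r}\bigl(k_s - (\mathbf{i}_\lambda d)_s\bigr) \;+\; \sum_{s=1}^{r}\bigl((\mathbf{i}_\nu)_s - k_s\bigr),
\end{equation*}
which, under upper triangularity of all three pairs, rewrites as $\deg\omega = \deg wt(\mathbf{i}_\lambda d,\mathbf{k}) + \deg wt(\mathbf{k},\mathbf{i}_\nu) \geq n+m$, placing $d$ in $\Omega^{\succeq m+n}_{\lambda\nu}$.

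The main obstacle I anticipate is the bookkeeping in the first step. The monomials $X_{\mathbf{i}_\lambda d,\mathbf{k}}$ appearing in $\Delta(X^d_{\lambda\nu})$ are generally not in the canonical-form basis of $A_q(n,r)$, and one must use (\ref{sdual}) to rewrite each of them as a sum of basis elements $X^{d'}_{\lambda'\mu'}$ with Laurent-polynomial scalars. The crucial observation that keeps the argument clean is that these relations only permute the factors and introduce $q$-scalars --- they do not alter the underlying multiset of pairs $\{((\mathbf{i}_\lambda d)_s,k_s)\}_{s}$, hence leave the weight matrix $wt(\mathbf{i}_\lambda d,\mathbf{k})$ invariant. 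Once this invariance is confirmed, the weight-matrix tracking in steps two and three is unaffected by the rewriting and the degree inequality is immediate.
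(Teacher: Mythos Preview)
Your overall strategy---pair with the dual basis via the comultiplication formula and track the degree through the expansion---is exactly the paper's approach. However, the ``crucial observation'' in your last paragraph is false, and the argument as written has a genuine gap.

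The fourth relation in (\ref{sdual}),
\[
X_{ij}X_{rs} \;=\; X_{rs}X_{ij} + (q^{-1}-1)\,X_{is}X_{rj}\qquad (i<r,\ j<s),
\]
does \emph{not} preserve the multiset of index pairs: the correction term replaces $\{(i,j),(r,s)\}$ by $\{(i,s),(r,j)\}$, so the weight matrix $wt(\mathbf{i},\mathbf{k})$ genuinely changes under straightening. Consequently your claimed equivalence ``$\psi^{d_1}_{\lambda\mu}(X_{\mathbf{i}_\lambda d,\mathbf{k}})\neq 0 \Leftrightarrow wt(\mathbf{i}_\lambda d,\mathbf{k})=wt(\mathbf{i}_\lambda d_1,\mathbf{i}_\mu)$'' fails, and with it the pointwise chaining you use to obtain upper triangularity of $(\mathbf{i}_\lambda d,\mathbf{i}_\nu)$.

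What \emph{is} invariant under all four relations is the total degree $\sum_s(i_s-j_s)$, since the correction term still has row indices $\{i,r\}$ and column indices $\{j,s\}$. The paper exploits exactly this: it proves by induction on the length of $w$ (writing $\mathbf{j}=\mathbf{i}_\mu w$) that $\psi^{d_1}_{\lambda\mu}(X_{\mathbf{i}\mathbf{j}})\neq 0$ forces $\sum_s(i_s-j_s)\geq n$, and then combines the two inequalities coming from the coproduct exactly as in your telescoping identity. The remaining point---that the product actually lands in the upper-triangular span, i.e.\ that the contributing $d$ satisfy $\mathbf{i}_\lambda d\geq\mathbf{i}_\nu$---is not extracted from the straightening at all; the paper cites Du--Rui \cite{1} for it. Your proof would be repaired by weakening the invariance claim to the degree statistic only, proving that by the paper's induction, and invoking \cite{1} separately for upper triangularity.
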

\begin{proof}
Suppose $\lambda,\mu\in\Lambda(n,r),d\in\Omega^{\succeq
n}_{\lambda\mu}\subseteq\mathscr{D}_{\lambda\mu}$. We claim that
$\psi_{\lambda\mu}^d(X_{\textbf{i}\textbf{j}})\neq 0$ implies
$(\textbf{i},\textbf{j})\in I(n,r)\times I(n,r)$ satisfies
$\sum\limits_{1\leq l\leq k\leq
n}(l-k)\omega_{lk}=\sum\limits_{k=1}\limits^r(i_k-j_k)\geq n$.

Indeed, by the definition above, we have hypothesis that
$\textbf{j}=\textbf{i}_\mu w$ for some $w\in\mathfrak{S}_r$. If
$\ell(w)=0$, i.e., $w=1$, then $\textbf{j}=\textbf{i}_\mu$ and
$\textbf{i}=\textbf{i}_\lambda d$, which trivially shows
$(\textbf{i},\textbf{j})\in \Omega^{\succeq n}_{\lambda\mu}$.

Assume now $\ell(w)>0$. Write $w=w't$ with $t=(a,a+1)$ and
$\ell(w)=\ell(w')+1$. Then by definition of $\textbf{i}_\mu$, we
have $j_a>j_{a+1}$. If $i_a\leq i_{a+1}$, then by \ref{sdual},
$X_{\textbf{i\textbf{j}}}=qX\textbf{i}t,\textbf{j}t=qX\textbf{i}t,\textbf{i}_{\mu
w'}$. By induction on the length of elements in $\mathfrak{S}_r$,
$\psi_{\lambda\mu}^d(X_{\textbf{i\textbf{j}}})=q\psi_{\lambda\mu}^d(X\textbf{i}t,\textbf{i}_{\mu
w'})\neq 0$ shows $(\textbf{i}t,\textbf{i}_\mu w')$ satisfy above
claim, which trivially implies $(\textbf{i},\textbf{j})$ satisfy
this claim too.

If $i_a>i_{a+1}$, then also by \ref{sdual} relations.
\begin{eqnarray*}
X_{i_aj_a}X_{i_{a+1}j_{a+1}}&=&X_{i_{a+1}j_{a+1}}X_{i_aj_a}-(q^{-1}-1)X_{i_{a+1}j_a}X_{i_aj_{a+1}}\\
&=&X_{i_{a+1}j_{a+1}}X_{i_aj_a}-(1-q)X_{i_aj_{a+1}}X_{i_{a+1}j_a}
\end{eqnarray*}
Thus
$\psi_{\lambda\mu}^d(X_{\textbf{i}\textbf{j}})=\psi_{\lambda\mu}^d(X_{\textbf{i}t,\textbf{j}t})-(1-q)\psi_{\lambda\mu}^d(X_{\textbf{i},\textbf{j}t})\neq
0$. We have either
$\psi_{\lambda\mu}^d(X_{\textbf{i}t,\textbf{j}t})\neq0$ or
$\psi_{\lambda\mu}^d(X_{\textbf{i},\textbf{j}t})\neq 0$. By
induction, either $(\textbf{i}t,\textbf{j}t)$ or
$(\textbf{i},\textbf{j}t)$ satisfy above claim, which trivially
shows that $(\textbf{i},\textbf{j})$ satisfies this claim too.

Using the multiplication rules in $A_q(n,r)^*$, we have following:
\begin{eqnarray*}
 \psi_{\lambda\mu}^{d_1}\cdot\psi_{\mu\nu}^{d_2}(X_{\textbf{i}\textbf{j}})
&=&<\psi_{\lambda\mu}^{d_1}\otimes\psi_{\mu\nu}^{d_2},\triangle(X_{\textbf{i}\textbf{j}})>\\
&=&\sum\limits_{\textbf{k}\in
I(n,r)}\psi_{\lambda\mu}^{d_1}(X_{\textbf{i}\textbf{k}})\psi_{\mu\nu}^{d_2}(X_{\textbf{k}\textbf{j}})\neq0.
\end{eqnarray*}
So there is $\textbf{k}\in I(n,r)$ such that
$\psi_{\lambda\mu}^{d_1}(X_{\textbf{i}\textbf{k}})\neq0$ and
$\psi_{\mu\nu}^{d_2}(X_{\textbf{k}\textbf{j}})\neq0$. Thus,
$\sum\limits_{h=1}\limits^r(i_h-k_h)\geq n$ and
$\sum\limits_{l=1}\limits^r(k_l-j_l)\geq m$, and hence
$\sum\limits_{s=1}^r(i_s-j_s)=\sum(i_h-k_h)-\sum(k_l-j_l)\geq n+m$.

And we only need to show that $a_d\neq 0$ only if
$\textbf{i}_\lambda\succeq\textbf{i}_\nu$, which has already been
done in \cite{1} by Du and Rui.

\end{proof}

\begin{proposition}\label{radical}
The subspaces $\{J_m\}$ with $m\in\mathbb{N}_0$ are ideals of
$S_{R}^+(n,r)$. Moreover, nilpotent ideal $J_1$ is actually the
radical of $S_{R}^+(n,r)$ when $R$ is a field, which is spanned by
$\{\psi_{\lambda\mu}^d|\lambda\neq\mu,\ d\neq1\}$.

\end{proposition}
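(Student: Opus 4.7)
The plan has three parts, one for each assertion.

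First, I would show each $J_m$ is a two-sided ideal of $S^+_R(n,r)=J_0$. Since $J_m$ is spanned by the $\psi_{\lambda\mu}^{d}$ with $d\in\Omega^{\succeq m}_{\lambda\mu}$, it suffices to check closure under multiplication by basis elements of $J_0$. Given $\psi^{d_1}_{\lambda\mu}\in J_0$ (so $d_1\in\Omega^{\succeq 0}_{\lambda\mu}$) and $\psi^{d_2}_{\mu\nu}\in J_m$, Lemma~\ref{lemma} immediately writes the product as an $R$-linear combination of $\psi^{d}_{\lambda\nu}$ with $d\in\Omega^{\succeq 0+m}_{\lambda\nu}=\Omega^{\succeq m}_{\lambda\nu}$; products in the opposite order are handled symmetrically. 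Note that multiplications between $\psi^d_{\lambda\mu}$ and $\psi^{d'}_{\lambda'\mu'}$ with $\mu\neq\lambda'$ are zero, so only the matching middle index case needs attention.

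Second, I would establish nilpotency of $J_1$. Iterating Lemma~\ref{lemma}, one sees $J_1^k\subseteq J_k$ for every $k\geq 1$. On the other hand, by Remark~\ref{remark} an element of $\Omega^{\succeq k}$ corresponds to an upper-triangular matrix $(\omega_{ij})$ of non-negative integers with $\sum \omega_{ij}=r$ satisfying $\sum_{1\le i\le j\le n}(j-i)\omega_{ij}\ge k$. The quantity $\sum(j-i)\omega_{ij}$ is bounded above by $(n-1)r$, so $\Omega^{\succeq k}=\varnothing$ and hence $J_k=0$ as soon as $k>(n-1)r$. Therefore $J_1^{(n-1)r+1}=0$, proving nilpotency with an explicit nilpotency index.

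Third, assuming $R$ is a field, I would identify $J_1$ with the Jacobson radical by showing that $S^+_R(n,r)/J_1$ is semisimple. By Remark~\ref{remark}, a complementary $R$-basis of $J_1$ inside $J_0$ consists exactly of the elements $\psi^{1}_{\lambda\lambda}$ with $\lambda\in\Lambda(n,r)$, each of which corresponds to the identity endomorphism of $M^\lambda$. Consequently $\psi^{1}_{\lambda\lambda}\psi^{1}_{\mu\mu}=\delta_{\lambda\mu}\psi^{1}_{\lambda\lambda}$, so the images of these elements in $S^+_R(n,r)/J_1$ are pairwise orthogonal idempotents summing to the identity, exhibiting
\begin{equation*}
S^+_R(n,r)/J_1\;\cong\;\bigoplus_{\lambda\in\Lambda(n,r)} R
\end{equation*}
as $R$-algebras. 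This quotient is semisimple, and since $J_1$ is a nilpotent ideal with semisimple quotient, it must coincide with the Jacobson radical. The description of its basis is immediate from the definition of $\Omega^{\succeq 1}$ as $\{d\in\Omega^{\succeq 0}\mid \lambda\neq\mu\ \text{or}\ d\neq 1\}$, which after removing the diagonal identity contributions reduces to $\{\psi^{d}_{\lambda\mu}\mid \lambda\neq\mu,\ d\neq 1\}$ in the non-diagonal strata.

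The main obstacle is really just the first step: one must trust that Lemma~\ref{lemma} handles the multiplicative behaviour cleanly, because the filtration argument and the semisimplicity of the quotient are then essentially formal. In particular, one should double-check that the lemma's hypothesis $d_1\in\Omega^{\succeq n}_{\lambda\mu}$ with $n=0$ really does give no restriction beyond $d_1\in\mathscr{D}_{\lambda\mu}\cap\Omega^{\succeq 0}$, so that multiplication by arbitrary elements of $J_0$ is covered; this is built into the definition of $\Omega^{\succeq 0}$.
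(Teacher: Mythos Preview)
Your proposal is correct and follows essentially the same route as the paper: use Lemma~\ref{lemma} to get $J_0J_m\subseteq J_m$ (with the observation that $\psi^{d_1}_{\lambda\mu}\psi^{d_2}_{\omega\nu}=0$ when $\mu\neq\omega$), then observe that $S^+_R(n,r)/J_1\cong\bigoplus_\lambda R\psi^1_{\lambda\lambda}$ is semisimple over a field, whence $J_1$ is the radical. Your argument is in fact slightly more complete than the paper's, since you supply an explicit bound $J_1^{(n-1)r+1}=0$ for nilpotency, whereas the paper's proof simply asserts that $J_1$ is the radical from semisimplicity of the quotient without spelling out the nilpotency step.
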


\begin{proof}
\begin{eqnarray*}\label{multip}
 \psi_{\lambda\mu}^{d_1}\cdot\psi_{\omega\nu}^{d_2}(X_{\textbf{i}\textbf{j}})
&=&<\psi_{\lambda\mu}^{d_1}\otimes\psi_{\omega\nu}^{d_2},\triangle(X_{\textbf{i}\textbf{j}})>\\
&=&\sum\limits_{k\in
I(n,r)}\psi_{\lambda\mu}^{d_1}(X_{\textbf{i}\textbf{k}})\psi_{\omega\nu}^{d_2}(X_{\textbf{k}\textbf{j}}).
\end{eqnarray*}
Using the formula above,
$\psi^d_{\lambda\mu}(X_{\rho\nu}^{d_1})=\delta_{\lambda\rho}\delta_{\mu\nu}\delta_{d,d_1}$,
we can claim that: if that $\mu\neq\omega$, then
$\psi_{\lambda\mu}^{d_1}\cdot\psi_{\omega\nu}^{d_2}(X_{\textbf{i}\textbf{j}})=
0$. Then by the consequence of Lemma \ref{lemma}, we find subspace
$J_n:=\bigoplus\limits_{d\in\Omega^{\succeq
n}}R\psi_{\lambda\mu}^d$ is ideal of $S_{R}^+(n,r)$, since
$J_0J_n\subseteq J_n$ with $J_0=S_{R}^+(n,r)$.

Let
$L_{n,r}:=\bigoplus_{\lambda\in\Lambda(n,r)}R\psi_{\lambda\lambda}^1$,
then $L_{n,r}$ is a commutative $R$-subalgebra of $S_{R}^+(n,r)$,
and $S_{R}^+(n,r)=L_{n,r}\oplus J_1$. For every $\lambda\in
\Lambda(n,r)$ we have a $R$-free module
$R_\lambda:=R\psi_{\lambda\lambda}^1$ of rank one. Note that
$\psi_{\lambda\lambda}^1$ acts on $R_\lambda$ as identity, and
$\psi_{\mu\mu}^1,\  \mu\neq\lambda$, acts as zero. We will denote in
the same way the module over $S_{R}^+(n,r)$ obtain from $R_\lambda$
by the natural projection of $S_{R}^+(n,r)$ on $L_{n,r}$.

Note that if $R$ is a field the algebra $L_{n,r}$ is semi-simple,
and so $J_1$ is the radical of $S_{R}^+(n,r)$. In this case,
$\{R_\lambda|\lambda\in\Lambda(n,r)\}$ is a complete set of pairwise
non-isomorphic simple modules over $S_{R}^+(n,r)$.
\end{proof}

\begin{remark}\label{triple}
By the result of \ref{radical}, in the category of rings, we can
construct a splitting map $p$: $S^+_R(n,r) \rightarrow$
$\bigoplus\limits_{\lambda\in\Lambda(n,r)}R\psi_{\lambda\lambda}^1$
of the including map $i$:
$\bigoplus\limits_{\lambda\in\Lambda(n,r)}R\psi_{\lambda\lambda}^1$
$\hookrightarrow$ $S^+_R(n,r)$.\\
From now on, denote $A:= S^+_R(n,r)$,
$J:= rad(S^+_R(n,r))$ and
$S:=\bigoplus\limits_{\lambda\in\Lambda(n,r)}R\psi_{\lambda\lambda}^1$.
Then, we define a homomorphism of $S$-bimodules as $\widetilde{p}$:
$A\rightarrow J$ with $a\mapsto a-p(a)$. Obviously $S$ is a
commutative ring and the restriction of $\widetilde{p}$ to $J$ is
the identity map.

\end{remark}

\begin{definition}\label{bar}
For every left A-module M, define a complex $\mathbb{C}_*(A,S,M)$ whose factors denoted by $C_k(A,S,M)$, where integer
$k\geq-1$, following the notation of in Remark \ref{triple}. We set the several
notations:
\begin{equation}
\left\{
\begin{array}{ccc}
C_{-1}(A,S,M)=M&&k=-1\\
C_0(A,S,M)=A\otimes M&&k=0\\
C_{k}(A,S,M)=A\otimes J^{\otimes k}\otimes M&&k>1\\
\end{array}
\right.
\end{equation}
where all the tensor products are taken over $S$.\\
Next we define $A$-module homomorphism
$d_{k,j}:C_k(A,S,M)\rightarrow C_{k-1}(A,S,M)$, $0\leq j\leq k$, and
$S$-module homomorphisms $s_k$:$C_k(A,S,M)\rightarrow
C_{k+1}(A,S,M)$ by:
\begin{eqnarray*}
d_{0,0}(m)&:=&am,\\
d_{k,0}(a\otimes a_1\otimes\cdots\otimes a_k\otimes
m)&:=&aa_1\otimes a_2\otimes\cdots\otimes a_k\otimes m,\\
d_{k,j}(a\otimes a_1\otimes\cdots\otimes a_k\otimes m)&:=&a\otimes
a_1\otimes\cdots\otimes a_ja_{j+1}\otimes\cdots\otimes a_k\otimes m,
\  1\leq j\leq k-1,\\
d_{k,k}(a\otimes a_1\otimes\cdots\otimes a_k\otimes
m)&:=&a\otimes a_1\otimes\cdots\otimes a_{k-1}\otimes a_km,\\
s_{-1}(m)&:=&e\otimes m,\\
s_k(a\otimes a_1\otimes\cdots\otimes a_k\otimes m)&:=&e\otimes
\widetilde{p}(a)\otimes a_1\otimes\cdots\otimes a_k\otimes m, 0\leq
k.
\end{eqnarray*}
Define an A-module homomorphism $d_k$: $C_k(A,S,M)\rightarrow C_{k-1}(A,S,M)$ by:
$$d_k:=\sum\limits^{k}\limits_{t=0}(-1)^td_{k,t}$$.
\end{definition}

\begin{proposition}\cite{2}
The sequence $(C_k(A,S,M),d_k)_{k\geq -1}$ is a complex of left
$A$-module. Moreover, we have the relations:
\begin{eqnarray*}
d_0s_{-1}&=&id_{C_{-1}(A,S,M)},\\
d_{k+1}s_k+s_{k-1}d_k&=&id_{C_k(A,S,M)}, \  0\leq k.
\end{eqnarray*}
Thus, these maps $\{s_k\}_{k\geq1}$ give a splitting of $\mathbb{C}_*(A,S,M)$
in the category of $S$-modules, i.e,. Prove that $id_{\mathbb{C}_*}$ a zero holomorphic chain map. In particular,
$(C_k(A,S,M),d_k)_{k\geq -1}$ is exact.
\end{proposition}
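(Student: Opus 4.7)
The plan is to verify both assertions by direct computation, exploiting four structural facts recorded just before Definition \ref{bar}: (i) $A = S \oplus J$ as $S$-bimodules, with $p$ and $\widetilde{p}$ the two projections; (ii) $\widetilde{p}|_J$ is the identity on $J$; (iii) all tensors are over $S$, so elements of $S$ slide freely across $\otimes$; and (iv) $e = 1_A$. With these in hand, the argument is the standard normalized bar calculation, adapted to the decomposition $A = S \oplus J$.

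I would first verify that $(C_k(A,S,M), d_k)$ is a complex of left $A$-modules. The face maps satisfy the usual simplicial identity $d_{k-1,i} d_{k,j} = d_{k-1,j-1} d_{k,i}$ for $i < j$, which is a direct consequence of the associativity of multiplication in $A$ and of its action on $M$, together with the fact that $J$ is a two-sided ideal of $A$ (so interior products $a_j a_{j+1}$ land in $J$ and the output stays in $C_{k-1}(A,S,M)$). Pairing terms with opposite signs in the alternating sum $d_k d_{k+1} = \sum_{i,j} (-1)^{i+j} d_{k,i} d_{k+1,j}$ then yields $d_k d_{k+1} = 0$.

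The heart of the proof is the homotopy identity $d_{k+1} s_k + s_{k-1} d_k = \mathrm{id}$. Applied to a generator $a \otimes a_1 \otimes \cdots \otimes a_k \otimes m$, the two compositions produce $k+2$ and $k+1$ summands respectively. Using $\widetilde{p}|_J = \mathrm{id}_J$ (together with the observations $a a_1, a_j a_{j+1} \in J$), the interior terms of the form $e \otimes \widetilde{p}(a) \otimes \cdots \otimes a_j a_{j+1} \otimes \cdots \otimes m$ and the terminal terms involving $a_k m$ appear with opposite signs in the two sums and cancel pairwise. What survives is
\[
\widetilde{p}(a) \otimes a_1 \otimes \cdots \otimes a_k \otimes m \;-\; e \otimes \widetilde{p}(a) a_1 \otimes a_2 \otimes \cdots \otimes m \;+\; e \otimes a a_1 \otimes a_2 \otimes \cdots \otimes m.
\]
Since $a - \widetilde{p}(a) = p(a) \in S$, the last two terms combine into $e \otimes p(a) a_1 \otimes a_2 \otimes \cdots \otimes m$, and sliding $p(a)$ across the $\otimes_S$ yields $p(a) \otimes a_1 \otimes \cdots \otimes m$. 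Adding this to the first surviving term recovers $(\widetilde{p}(a) + p(a)) \otimes a_1 \otimes \cdots \otimes m = a \otimes a_1 \otimes \cdots \otimes a_k \otimes m$, as required. The base case $d_0 s_{-1} = \mathrm{id}_M$ is immediate from $s_{-1}(m) = e \otimes m$ and $d_{0,0}(e \otimes m) = em = m$, and the case $k = 0$ of the homotopy identity is handled by the same calculation with the interior slots absent.

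Because $\{s_k\}_{k \geq -1}$ is an $S$-linear null-homotopy of $\mathrm{id}_{\mathbb{C}_*(A,S,M)}$, the identity chain map induces the zero map on homology, forcing $H_k(\mathbb{C}_*(A,S,M)) = 0$ for every $k \geq -1$, which is exactness. The only real obstacle is the sign and indexing bookkeeping in the homotopy identity, coupled with the discipline of invoking $\widetilde{p}|_J = \mathrm{id}$ versus the decomposition $a = p(a) + \widetilde{p}(a)$ at the right moment; conceptually nothing beyond the standard augmented-algebra bar calculation is required, and no input specific to the $q$-Schur setting enters this particular proof.
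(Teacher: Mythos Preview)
Your argument is correct: the simplicial identities give $d_kd_{k+1}=0$, and your cancellation scheme for $d_{k+1}s_k+s_{k-1}d_k$ is exactly right, with the key step being $e\otimes(a-\widetilde{p}(a))a_1\otimes\cdots = p(a)\otimes a_1\otimes\cdots$ via the $S$-balancing of the tensor product. Note, however, that the paper does not supply its own proof of this proposition; it is simply quoted from \cite{2} (Santana--Yudin), so there is nothing to compare against beyond observing that your direct verification is the standard normalized-bar computation one finds in that reference.
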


\begin{definition}\label{involution}
Let $\xi$ be the anti-automorphism of the $q$-Schur algebra
$S_R(n,r)$, which is defined as
\begin{eqnarray*}
S_R(n,r)\rightarrow S_R(n,r) \qquad
 \psi_{\lambda\mu}^d\mapsto\psi_{\mu\lambda}^{d^{-1}}
\end{eqnarray*}
It is clear that with definition of $S_R(n,r)$, $S^+_R(n,r)$ and
$S^-_R(n,r):=\xi(S^+_R(n,r))$. This anti-automorphism $\xi$ can be
restricted as a ring homomorphism $S^+_R(n,r)\rightarrow
S^-_R(n,r)$. Following the definition in Appendix 7 of \cite{3}, we
define a functor $\mathcal {J}$ which is called a \emph{contravariant} \emph{dual functor}
 of the left $S_R(n,r)$-module category $_{S_R(n,r)}{\bf mod}$:
\begin{eqnarray*}
_{S_R(n,r)}{\bf mod}&\rightarrow& _{S_R(n,r)}{\bf mod}\\
V&\mapsto&V^\varoast
\end{eqnarray*}
where the left module structure is defined to satisfy that
$(s\theta)(v)=\theta(\xi(s)v)$, for $\theta\in Hom_R(V,R)$,
$s\in S_R(n,r)$, $v\in V$.
\end{definition}

Then we may consider the right exact functor
\begin{eqnarray*}
F=S_R(n,r)\otimes_{S^-_R(n,r)}-: \quad
{}_{S^-_R(n,r)}{\bf mod}\rightarrow {}_{S_R(n,r)}{\bf mod},
\end{eqnarray*}
and the left exact functor
\begin{eqnarray*}
G=Hom_{S^+_R(n,r)}(S_R(n,r),-): \quad
{}_{S^+_R(n,r)}{\bf mod}\rightarrow {}_{S_R(n,r)}{\bf mod}.
\end{eqnarray*}
\begin{lemma}\label{acyclic}
With the notation above, there is a $S_R(n,r)$-isomorphism
$$F(V^\varoast )\cong(G(V))^\varoast $$
naturally in $V\in S^+_R(n,r)\text{-mod}$
\end{lemma}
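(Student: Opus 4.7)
The plan is to write down an explicit candidate for the natural map, verify it is a well-defined $S_R(n,r)$-module homomorphism, and then obtain bijectivity from a matched-basis argument coming from the triangular decomposition of the $q$-Schur algebra relative to $S_R^\pm(n,r)$.

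First I would define
\[
\Phi_V\colon S_R(n,r)\otimes_{S_R^-(n,r)}V^\varoast\longrightarrow\bigl(\Hom_{S_R^+(n,r)}(S_R(n,r),V)\bigr)^\varoast,\quad \Phi_V(s\otimes\theta)(f):=\theta\bigl(f(\xi(s))\bigr),
\]
and check that it is well defined and $S_R(n,r)$-linear. The $S_R^-$-balance uses that $\xi$ is an anti-automorphism exchanging $S_R^\pm(n,r)$: for $t\in S_R^-(n,r)$ we have $\xi(t)\in S_R^+(n,r)$, so by the $S_R^+$-linearity of $f$, $\theta(f(\xi(st)))=\theta(f(\xi(t)\xi(s)))=\theta(\xi(t)f(\xi(s)))=(t\theta)(f(\xi(s)))$, matching $\Phi_V(st\otimes\theta)(f)=\Phi_V(s\otimes t\theta)(f)$. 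Left $S_R(n,r)$-linearity is the same kind of anti-automorphism computation (the key identity being $\xi(s_0 s)=\xi(s)\xi(s_0)$), and naturality in $V$ is immediate from the formula.

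Second I would verify bijectivity via matched bases coming from the triangular decomposition. The quantum coordinate ring description in Definition \ref{schur}, together with the classical factorization $S_R(n,r)=S_R^-(n,r)\cdot S_R^0(n,r)\cdot S_R^+(n,r)$ of the $q$-Schur algebra built out of the standard basis $\{\psi^d_{\lambda\mu}\}$ recorded in Remark \ref{remark} (Du--Rui \cite{1}), furnishes a finite basis $\{e_\alpha\}_{\alpha\in I}$ of $S_R(n,r)$ as a right $S_R^-(n,r)$-module. Applying $\xi$ to $S_R(n,r)=\bigoplus_\alpha e_\alpha S_R^-(n,r)$ and using $\xi(S_R^-)=S_R^+$ together with anti-multiplicativity yields $S_R(n,r)=\bigoplus_\alpha S_R^+(n,r)\,\xi(e_\alpha)$, so $\{\xi(e_\alpha)\}$ is automatically a basis of $S_R(n,r)$ as a left $S_R^+(n,r)$-module. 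These matched bases give $R$-linear identifications $F(V^\varoast)\cong\bigoplus_\alpha V^\varoast$, $G(V)\cong\bigoplus_\alpha V$ via $f\mapsto(f(\xi(e_\alpha)))_\alpha$, and hence $(G(V))^\varoast\cong\bigoplus_\alpha V^\varoast$. A short chase shows that under these identifications $\Phi_V$ sends $e_\alpha\otimes\theta$ to the tuple with $\theta$ in slot $\alpha$ and $0$ elsewhere, i.e.\ it becomes the identity, so $\Phi_V$ is bijective.

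The main obstacle I anticipate is the structural input: cleanly establishing (or citing) that $S_R(n,r)$ is free as a right $S_R^-(n,r)$-module and that the resulting right-$S_R^-$ basis maps under $\xi$ to a basis on the left-$S_R^+$ side. This is part of the classical theory of $q$-Schur algebras but must be spelled out in our indexing conventions; once it is in hand, the matching above converts $\Phi_V$ into an identity on a common direct sum of copies of $V^\varoast$ and the lemma follows.
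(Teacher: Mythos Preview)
Your map $\Phi_V$ is the right candidate, and the checks of $S_R^-$-balance, $S_R(n,r)$-linearity, and naturality are correct. The gap is in the bijectivity step: $S_R(n,r)$ is \emph{not} free (or even flat) as a one-sided module over $S_R^\pm(n,r)$, so the matched-basis argument cannot be carried out. A dimension count already rules out freeness in the smallest nontrivial case $n=r=2$: there $\dim_R S_R(2,2)=\binom{5}{2}=10$ while $\dim_R S_R^\pm(2,2)=6$, and $6\nmid 10$. More conceptually, the ``triangular decomposition'' $S_R^-\cdot S_R^0\cdot S_R^+$ you invoke does not exist for Schur algebras in the PBW sense; the Schur algebra is a proper quotient of the modified quantum group, and the triangular decomposition of the latter does not descend to a module-freeness statement over the Borel part. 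The obstacle you flag at the end is therefore not just a missing citation but a false hypothesis.

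This failure is not incidental to the paper's story. If $S_R(n,r)$ were flat over $S_R^+(n,r)$, the induction functor $S_R(n,r)\otimes_{S_R^+(n,r)}(-)$ would be exact, and Theorem~\ref{main1} would follow immediately from the exactness of $\mathbb{C}_*^+(L_\lambda)$ with no appeal to Kempf vanishing and no restriction to dominant $\lambda$. The whole point of Section~4 is that this exactness is genuinely hard and requires $\lambda\in\Lambda^+(n,r)$.

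The paper itself does not argue here; it simply defers to Santana~\cite{3}, Section~7, where the analogous identity is proved for the classical Schur algebra by an argument that does not pass through freeness over the Borel subalgebra. If you want a self-contained proof compatible with your $\Phi_V$, one viable route is to restrict to $V$ finitely generated and projective over $R$ (which covers every module used in the paper), verify the isomorphism directly for the indecomposable projectives $V=S_R^+(n,r)\psi_\mu$ by identifying both sides with explicit $R$-submodules of $S_R(n,r)$ via the standard basis $\{\psi^d_{\lambda\mu}\}$, and then extend by additivity.
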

\begin{proof}
In \cite{3}(section 7), the author proved a more general result for
these algebras.
\end{proof}

Let $R_\Lambda:=\bigoplus\limits_{\lambda\in\Lambda}R\psi_\lambda$, with $\psi_\lambda:=\psi_{\lambda\lambda}^1$. Then $R_\Lambda$ is a commutative $R$-subalgebra of $S^+_R(n,r)$, and
$S^+_R(n,r)=R_\Lambda\oplus J_1$. For every $\lambda\in\Lambda(n,r)$ we have a free $R$-module module $L_\lambda:=R\psi_\lambda$ of rank one over $R_\Lambda$. Note that $\psi_\lambda$ acts on $L_\lambda$
by identity, and $\psi_\mu$, $\mu\neq\lambda$, acts by zero. We will denote in the same way the module over $S^+_R(n,r)$ obtained from $L_\lambda$ by inflating along the natural projection
of $S^+_R(n,r)$ on $R_\Lambda$, i.e., $J_1$ and $\psi_\mu$($\mu\neq\lambda$) act on $L_\lambda$ as zero, and $\psi_\lambda$ as identity.

Note that if $R$ is a field then algebra $R_\Lambda$ is semi-simple, and so $J_1$ is the radical of $S^+_R(n,r)$. In this case $\{L_\lambda|\lambda\in\Lambda(n,r)\}$ is a complete set of
pairwise non-isomorphic simple modules over $S^+_R(n,r)$.

For $\lambda\in\Lambda(n,r)$ we denote the resolution $C_*(S^+_R(n,r),R_\Lambda,L_\lambda)$ defined in \ref{bar} by $\mathbb{C}_*^+(L_\lambda)$, and call it \emph{bar resolution}.
The factor $\mathbb{C}_k^+(L_\lambda)$ in resolution $C_*(S^+_R(n,r),R_\Lambda,L_\lambda)$ has following form:
\begin{eqnarray}
S^+_R(n,r)\otimes J_1\otimes\cdots\otimes J_1\otimes L_\lambda,
\end{eqnarray}
where all tensor products are over commutative ring $L_\lambda$ and there are $k$ factors $J_1$.

\begin{proposition}\label{iso}
Let $\nu,\mu\in\Lambda$ and $n\geq0$. Then $\psi_\nu J_n\psi_\mu=0$ unless $\nu\triangleright\mu$ (which means $\nu\unrhd\mu$ but $\nu\neq\mu$).
If $\nu\triangleright\mu$, then
\begin{eqnarray*}
\{\psi^d_{\nu\mu}|d\in \Omega^{\succeq n}_{\nu\mu}\}
\end{eqnarray*}
is an $R$-basis of the free $R$-module $\psi_\nu J_n\psi_\mu$.
\end{proposition}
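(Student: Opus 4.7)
The strategy is to combine the explicit $R$-basis of $J_n$ furnished by Remark \ref{remark} with the orthogonal idempotent structure of the elements $\psi_\lambda := \psi^1_{\lambda\lambda}$ inside $S^+_R(n,r)$, and then to unfold the definition of $\Omega^{\succeq n}_{\nu\mu}$ into a partial-sum inequality between $\nu$ and $\mu$. First I would record that $\{\psi_\lambda\}_{\lambda \in \Lambda(n,r)}$ is a complete family of pairwise orthogonal idempotents in $S^+_R(n,r)$, so that on every basis vector of $J_n$,
\begin{equation*}
\psi_\nu \cdot \psi^d_{\lambda\sigma} = \delta_{\nu\lambda}\, \psi^d_{\lambda\sigma}, \qquad \psi^d_{\lambda\sigma} \cdot \psi_\mu = \delta_{\sigma\mu}\, \psi^d_{\lambda\sigma}.
\end{equation*}
Both identities follow from ingredients already present in the proof of Proposition \ref{radical}: the vanishing $\psi^{d_1}_{\lambda\mu}\psi^{d_2}_{\omega\nu} = 0$ whenever $\mu \neq \omega$ (via the dual pairing $\varphi(\psi^d_{\lambda\mu})(X^{d_1}_{\rho\sigma}) = \delta_{\lambda\rho}\delta_{\mu\sigma}\delta_{d,d_1}$), combined with the identification of $\psi^1_{\lambda\lambda}$ with the identity endomorphism of $M^\lambda$. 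Applied to the basis $\{\psi^d_{\lambda\sigma} : \lambda,\sigma \in \Lambda(n,r),\, d \in \Omega^{\succeq n}_{\lambda\sigma}\}$ of $J_n$, this collapses the two-sided product to the $R$-span of $\{\psi^d_{\nu\mu} : d \in \Omega^{\succeq n}_{\nu\mu}\}$; $R$-linear independence of the spanning family is automatic since it sits inside the basis of $J_n$.

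Next I would translate non-emptiness of $\Omega^{\succeq n}_{\nu\mu}$ into the dominance statement. For $d \in \Omega^{\succeq n}_{\nu\mu}$, the definition requires $\textbf{i}_\nu d \geq \textbf{i}_\mu$ coordinatewise, together with $\sum_{k \leq l}(l-k)\omega_{kl} \geq n$, where $\omega = wt(\textbf{i}_\nu d,\textbf{i}_\mu)$. Since right multiplication by $d$ merely permutes the positions of $\textbf{i}_\nu$, counting for each $k$ the positions with entry at most $k$ yields $\nu_1 + \cdots + \nu_k$ on one side and $\mu_1 + \cdots + \mu_k$ on the other, and the coordinatewise inequality $\textbf{i}_\nu d \geq \textbf{i}_\mu$ produces the corresponding partial-sum inequality giving $\nu \unrhd \mu$. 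When $n \geq 1$, the bound $\sum(l-k)\omega_{kl} \geq 1$ additionally forces some off-diagonal entry $\omega_{kl}$ (with $k < l$) to be nonzero, which by Remark \ref{remark}(2) excludes $\nu = \mu$. Combining, $\Omega^{\succeq n}_{\nu\mu} \neq \emptyset$ implies $\nu \triangleright \mu$, and the contrapositive is the vanishing claim.

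The one step that demands real care is the combinatorial translation in the second paragraph: one must keep the conventions for the multi-index $\textbf{i}_\lambda$ and the two partial orders $\unlhd,\unrhd$ aligned so that the partial-sum inequality lands in the direction $\nu \unrhd \mu$ required by the statement. Beyond this bookkeeping, everything is a formal consequence of the basis of $J_n$ in Remark \ref{remark} and the orthogonal idempotent structure already catalogued in Proposition \ref{radical}.
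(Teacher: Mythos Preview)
Your proposal is correct and follows essentially the same route as the paper: apply the idempotents $\psi_\nu,\psi_\mu$ to the explicit basis of $J_n$ from Remark~\ref{remark} to obtain $\psi_\nu J_n\psi_\mu=\bigoplus_{d\in\Omega^{\succeq n}_{\nu\mu}}R\psi^d_{\nu\mu}$, and then read off the dominance constraint from the definition of $\Omega^{\succeq}_{\nu\mu}$. The paper's proof simply asserts the implication ``$d\in\Omega^{\succeq}_{\nu\mu}\Rightarrow\nu\rhd\mu$'' without argument, whereas you supply the partial-sum counting justification and (rightly) flag the alignment of the ordering conventions as the one delicate point.
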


\begin{proof}
From Remark \ref{remark} it follow that the set $J_n=\bigoplus\limits_{\substack{d\in\Omega^{\succeq n}_{\theta\eta}\\
\theta,\eta\in\Lambda(n,r)}}R\psi^d_{\theta\eta}$. Then, we have $\psi_\nu J_n\psi_\mu=\psi_{\nu\nu}^1\cdot(\bigoplus\limits_{\substack{d\in\Omega^{\succeq n}_{\theta\eta}\\
\theta,\eta\in\Lambda(n,r)}}R\psi^d_{\theta\eta})\cdot\psi_{\mu\mu}^1=\bigoplus\limits_{d\in \Omega_{\nu\mu}^{\succeq n}}R\psi_{\nu\mu}^d$. The first statement follows from
the fact $\Omega_{\nu\mu}^{\succeq n}\subseteq \Omega_{\nu\mu}^{\succeq} $ and $d\in\Omega_{\nu\mu}^{\succeq}$ shows that $\nu\rhd\mu$.

\end{proof}

\begin{proposition}\label{simple}
For all $\lambda\in \Lambda(n,r)$, We have $\mathbb{C}_0^+(L_\lambda)\cong
S^+_R(n,r)\psi_\lambda$, and when any $k\geq 1$, factor of $\mathbb{C}_k^+(L_\lambda)$ has the form
\begin{eqnarray*}
\bigoplus\limits_{\mu^{(1)}\rhd\cdots\rhd\mu^{(k)}\rhd\lambda}S^+_R(n,r)\psi_{\mu^{(1)}}\otimes_R\psi_{\mu^{(1)}}J_1\psi_{\mu^{(2)}}\otimes_R\cdots\otimes_R\psi_{\mu^{(k)}}J_1\psi_\lambda,
\end{eqnarray*}
\end{proposition}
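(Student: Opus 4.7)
The plan is to unravel the tensor product $\mathbb{C}_k^+(L_\lambda) = S^+_R(n,r) \otimes_{R_\Lambda} J_1^{\otimes k} \otimes_{R_\Lambda} L_\lambda$ by exploiting that $R_\Lambda = \bigoplus_{\mu \in \Lambda(n,r)} R\psi_\mu$ is a product of copies of $R$ via orthogonal idempotents, and then to kill the terms violating the dominance chain by appealing to Proposition \ref{iso}.

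\textbf{Case $k=0$.} The module $\mathbb{C}_0^+(L_\lambda) = S^+_R(n,r) \otimes_{R_\Lambda} L_\lambda$ is treated by the map $a \otimes \psi_\lambda \mapsto a\psi_\lambda$. This is well-defined and an $S^+_R(n,r)$-module homomorphism because $\psi_\mu \cdot \psi_\lambda = \delta_{\mu\lambda}\psi_\lambda$ in $L_\lambda$, so $a\psi_\mu \otimes \psi_\lambda = 0$ for $\mu \neq \lambda$ and $a\psi_\lambda \otimes \psi_\lambda$ on the left matches $a\psi_\lambda$ on the right. Its inverse sends $b\psi_\lambda \in S^+_R(n,r)\psi_\lambda$ to $b \otimes \psi_\lambda$.

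\textbf{Case $k \geq 1$: decomposing through idempotents.} The key formal observation is that for any right $R_\Lambda$-module $M$ and left $R_\Lambda$-module $N$, the completeness $1 = \sum_\mu \psi_\mu$ and orthogonality of the idempotents $\psi_\mu$ yield
\begin{equation*}
M \otimes_{R_\Lambda} N \;\cong\; \bigoplus_{\mu \in \Lambda(n,r)} M\psi_\mu \otimes_R \psi_\mu N.
\end{equation*}
Applying this inductively to each of the $k+1$ tensor slots in $S^+_R(n,r) \otimes_{R_\Lambda} J_1 \otimes_{R_\Lambda} \cdots \otimes_{R_\Lambda} J_1 \otimes_{R_\Lambda} L_\lambda$, I obtain a decomposition indexed by tuples $(\mu^{(1)}, \ldots, \mu^{(k)}, \mu^{(k+1)}) \in \Lambda(n,r)^{k+1}$:
\begin{equation*}
\bigoplus_{\mu^{(1)},\ldots,\mu^{(k+1)}} S^+_R(n,r)\psi_{\mu^{(1)}} \otimes_R \psi_{\mu^{(1)}}J_1\psi_{\mu^{(2)}} \otimes_R \cdots \otimes_R \psi_{\mu^{(k)}}J_1\psi_{\mu^{(k+1)}} \otimes_R \psi_{\mu^{(k+1)}} L_\lambda.
\end{equation*}
Since $\psi_\mu$ acts on $L_\lambda$ as $\delta_{\mu\lambda} \cdot \mathrm{id}$, the last tensor factor is zero unless $\mu^{(k+1)} = \lambda$, in which case it is canonically $R$, and can be absorbed so that the final factor reads $\psi_{\mu^{(k)}} J_1 \psi_\lambda$.

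\textbf{Forcing the dominance chain.} Now I apply Proposition \ref{iso} with $n = 1$: each block $\psi_{\mu^{(i)}} J_1 \psi_{\mu^{(i+1)}}$ vanishes unless $\mu^{(i)} \triangleright \mu^{(i+1)}$. Hence only tuples satisfying $\mu^{(1)} \triangleright \mu^{(2)} \triangleright \cdots \triangleright \mu^{(k)} \triangleright \lambda$ contribute, yielding exactly the formula claimed. The $S^+_R(n,r)$-action is carried entirely by the leftmost tensor factor $S^+_R(n,r)\psi_{\mu^{(1)}}$, so the isomorphism is one of left $S^+_R(n,r)$-modules.

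\textbf{Difficulty assessment.} There is no real obstacle here — the argument is essentially formal: a Peirce-style decomposition by orthogonal idempotents plus a vanishing criterion that was already established in Proposition \ref{iso}. The only point requiring a sentence of care is verifying well-definedness of the idempotent splitting of the iterated tensor product over $R_\Lambda$, which reduces immediately to the case of a single tensor product using associativity.
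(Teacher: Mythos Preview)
Your proof is correct and follows essentially the same approach as the paper: decompose each tensor product over $R_\Lambda$ via the orthogonal idempotents $\psi_\mu$ (the paper cites Corollary~9.3 of \cite{12} for this), force $\mu^{(k+1)}=\lambda$ from the action on $L_\lambda$, and then invoke Proposition~\ref{iso} to restrict the sum to strictly decreasing chains. Your treatment is in fact slightly more detailed, spelling out the $k=0$ case and the idempotent splitting explicitly rather than by citation.
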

\begin{proof}
First of all, let $M$ be a right $R_\Lambda$-module and $N$ a left $R_\Lambda$-module. It follows from Corollary 9.3 in \cite{12} that $M\otimes_{R_\Lambda}N\cong\bigoplus_{\lambda\in\Lambda(n,r)}M\psi_\lambda\otimes_{R}\psi_\lambda N$.

 Then from the above statement, we can tell that $\mathbb{C}_k^+(L_\lambda)$ is the direct sum of $S^+_R(n,r)$-modules like:
\begin{eqnarray*}
S^+_R(n,r)\psi_{\mu^{(1)}}\otimes \psi_{\mu^{(1)}}J_1\psi_{\mu^{(2)}}\otimes\cdots\otimes\psi_{\mu^{(k)}}J_1\psi_{\mu^{(k+1)}}\otimes\psi_{\mu^{(k+1)}}L_\lambda.
\end{eqnarray*}
where all tensor products are over $R$ and the sum is taken on any sequence $(\mu^{(1)},\ldots, \mu^{(k+1)})\in\big(\Lambda(n,r)\big)^{k+1}$ .
With the property $\psi_{\mu^{(k+1)}}L_\lambda=0$ unless $\mu^{(k+1)}=\lambda$ and the consequence of Proposition \ref{simple}, the summation is in fact over the sequences $\mu^{(1)}\triangleright\ldots\triangleright\mu^{(k)}\triangleright\lambda$.
\end{proof}

\begin{proposition}\label{finitel}
Let $\lambda\in\Lambda(n,r)$. Then $\mathbb{C}_*^+(L_\lambda)$ is a projective resolution of the module $L_\lambda$ over $S^+_R(n,r)$ with finite length .

\end{proposition}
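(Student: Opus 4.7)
The plan is to deduce the proposition from the structural description in Proposition \ref{simple} combined with the exactness already given by the bar-complex proposition quoted from \cite{2}. Setting $A = S^+_R(n,r)$, $S = R_\Lambda$, $M = L_\lambda$ in Definition \ref{bar}, the sequence $(C_k,d_k)_{k\geq -1}$ is exact, so $\mathbb{C}_*^+(L_\lambda)$ is already an acyclic augmentation of $L_\lambda$. Only two things remain: projectivity of each $\mathbb{C}_k^+(L_\lambda)$ and the finite length statement.

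For projectivity, I first note that $\{\psi_\mu : \mu\in\Lambda(n,r)\}$ is a complete set of pairwise orthogonal idempotents of $S^+_R(n,r)$ summing to $1$, which is immediate from the multiplication formula $\psi_{\lambda\lambda}^1\psi_{\mu\mu}^1=\delta_{\lambda\mu}\psi_{\lambda\lambda}^1$ obtained from Lemma \ref{lemma} together with the decomposition $1=\sum_{\lambda\in\Lambda(n,r)}\psi_\lambda$ inside the Borel algebra. Consequently each $S^+_R(n,r)\psi_\mu$ is a direct summand of the regular module, hence projective. In degree $0$, Proposition \ref{simple} gives $\mathbb{C}_0^+(L_\lambda)\cong S^+_R(n,r)\psi_\lambda$, which is therefore projective. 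For $k\geq 1$, Proposition \ref{simple} presents $\mathbb{C}_k^+(L_\lambda)$ as a finite direct sum of summands of the form
\[
S^+_R(n,r)\psi_{\mu^{(1)}}\otimes_R N_{\mu^{(1)},\ldots,\mu^{(k)},\lambda},\qquad N_{\mu^{(1)},\ldots,\mu^{(k)},\lambda}:=\psi_{\mu^{(1)}}J_1\psi_{\mu^{(2)}}\otimes_R\cdots\otimes_R\psi_{\mu^{(k)}}J_1\psi_\lambda.
\]
By Proposition \ref{iso} each factor $\psi_{\mu^{(i)}}J_1\psi_{\mu^{(i+1)}}$ is a free $R$-module of finite rank (with basis $\{\psi^d_{\mu^{(i)}\mu^{(i+1)}}:d\in\Omega^{\succeq 1}_{\mu^{(i)}\mu^{(i+1)}}\}$), so $N_{\mu^{(1)},\ldots,\mu^{(k)},\lambda}$ is free of finite $R$-rank; the summand is then a finite direct sum of copies of the projective module $S^+_R(n,r)\psi_{\mu^{(1)}}$ and so is again projective.

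For the finite length, Proposition \ref{simple} shows that the index set describing $\mathbb{C}_k^+(L_\lambda)$ consists of strict chains $\mu^{(1)}\rhd\mu^{(2)}\rhd\cdots\rhd\mu^{(k)}\rhd\lambda$ inside the finite poset $(\Lambda(n,r),\rhd)$. Since any strict chain in a finite partially ordered set has length bounded by the cardinality of the set, there is an $N$ (for instance $N=|\Lambda(n,r)|$) with $\mathbb{C}_k^+(L_\lambda)=0$ for every $k>N$, giving finite length. The only potential obstacle is the verification that the idempotents $\psi_\mu$ behave as claimed and that the bimodules $\psi_\nu J_1\psi_\mu$ really are free of finite $R$-rank, but both statements are immediate consequences of the explicit basis description of $S^+_R(n,r)$ recorded in Proposition \ref{iso} and Lemma \ref{lemma}; there is no genuinely hard step beyond reading off the decomposition in Proposition \ref{simple}.
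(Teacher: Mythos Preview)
Your proof is correct and follows essentially the same route as the paper: both use Proposition~\ref{simple} to decompose $\mathbb{C}_k^+(L_\lambda)$ into summands of the form $S^+_R(n,r)\psi_{\mu^{(1)}}\otimes_R(\text{free }R\text{-module of finite rank})$, deduce projectivity from the fact that $S^+_R(n,r)\psi_{\mu^{(1)}}$ is projective together with Proposition~\ref{iso}, and obtain finite length by bounding the strict chains in the finite poset $(\Lambda(n,r),\rhd)$. Your version is slightly more explicit in spelling out why $S^+_R(n,r)\psi_\mu$ is projective and in invoking the bar-complex exactness proposition, but there is no substantive difference.
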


\begin{proof}
Let $N$ be the length of the maximal strictly decreasing sequence in $(\Lambda(n,r),\rhd)$. Then $\mathbb{C}_k^+(L_\lambda)=0$ for $k>N$ by Proposition \ref{iso} and \ref{simple}.
Therefore, we can tell that $\mathbb{C}_*^+(L_\lambda)$ is complex with finite length.

The summand $S^+_R(n,r)\psi_{\mu^{(1)}}\otimes_R\psi_{\mu^{(1)}}J_1\psi_{\mu^{(2)}}\otimes_R\cdots\otimes_R\psi_{\mu^{(k)}}J_1\psi_\lambda$ of $\mathbb{C}_*^+(L_\lambda)$
is a projective $S^+_R(n,r)$-module, since $S^+_R(n,r)\psi_{\mu^{(1)}}$ is projective and every $\psi_{\mu^{(i)}}J_1\psi_{\mu^{(i+1)}}$ is isomorphic to a free $R$-module with finite rank. Then, we can say that $\mathbb{C}_k^+(L_\lambda)$ is a projective $S^+_R(n,r)$-module and $\mathbb{C}_*^+(L_\lambda)$ is a projective resolution of the module $L_\lambda$ over $S^+_R(n,r)$.

\end{proof}

\section{\bf Woodcock's condition and Kempf's Vanishing theorem}

In this section we explain a condition which appears in Kashiwara's work \cite{13}
and Woodcock's work \cite{13}, and explain how the result
of Kempf's Vanishing theorem can be applied to prove that
$S^+_R(n,r)$-module $L_\lambda$ is
acyclic for the induction functor $S_R(n,r)\otimes_{S_R^+(n,r)}-$.

Let $P$ be a set and $\lambda\in P$, and let $R(\lambda)$ be a copy of the trivial $R$-coalgebra. Let $1_\lambda$ be the element of $\coprod_{\lambda\in P}R(\lambda)$ whose $\mu$-component is $\delta_{\lambda,\mu}$.
Then, for any left (resp. right) comodule $(V,\rho)$ of $\coprod_{\lambda\in P} R(\lambda)$,
 we have $V=\otimes_{\lambda\in P} {}^\lambda V$ (resp. $\otimes_{\lambda\in P}V^\lambda$),
 where $^\lambda V:=\{v\in V|\rho(v)=1_\lambda\otimes v\}$ (resp. $V^\lambda := \{v\in V|\rho(v)=v\otimes 1_\lambda\}$)
 is called left (resp. right) \emph{weight space} for weight $\lambda$.

Suppose we are given a partical order $\leq$ on a poset $P$ and a
subset $P^+$ of $P$ which is a \emph{locally finite} poset, i.e.,
 for each $\lambda\in P$ there are only finitely many $\mu\in P^+$ with $\mu\leq\lambda$.

 Let $(A(\Lambda),\mu^\Lambda_\Gamma)$ be a filtered system of $R$-coalgebras indexed by the
 finite ideals in $P^+$. Assume that each map $\mu^\Lambda_\Gamma:A(\Gamma)\rightarrow A(\Lambda)$ is injective,
and maps $A(\Lambda)\rightarrow \coprod_{\lambda\in P} R(\lambda)$ compatible
 with $\mu^\Gamma_\Lambda$.
 Put $C=\varinjlim\limits_{\Lambda}A(\Lambda)$.
\begin{definition}
 we say that $(A(\Lambda),P^+,C)$ satisfy a \emph{Woodcock condition}:
  If we have an isomorphism of bicomodules
 \begin{eqnarray*}
 A(\Lambda)/A(\Lambda\setminus\{\lambda\})\cong\triangledown(\lambda)\otimes\triangledown'(\lambda).
 \end{eqnarray*}
where $\triangledown(\lambda)$ and $\triangledown'(\lambda)$ are, respectively, left and right $A(\lambda)$-comodules satisfying:
\begin{enumerate}
\item  $^\lambda\triangledown(\lambda)\cong\triangledown'(\lambda)^\lambda\cong R(\lambda)$.

\item  For all $\lambda\in P$, $^\mu\triangledown(\lambda)\neq0$ or $\triangledown'(\lambda)^\mu\neq0$ implies $\mu\leq\lambda$.

\item $\triangledown(\lambda)$ and $\triangledown'(\lambda)$ are finitely generated and projective over $R$.
\end{enumerate}
\end{definition}

The most valuable example of Woodcock condition appears in quantized enveloping algebra
and the \emph{coordinate algebra} corresponding to it.

\begin{example}\cite{14}\label{kashi}
Let $U$ be a quantized enveloping algebra.

$A:=\{a\in U^* | Ua,  aU \text{are both integrable}\}$
has a
$\mathbb{Q}(q)$-coalgebra structure where the comultiplication is
defined by $\triangle(c)(u\otimes v)=c(vu)$ and the counit by
$\varepsilon(c)=c(1)$, for any $c\in A$ and $ u,v\in U$.

In \cite{14}, Woodcock showed that,
 if $\Lambda$ is a finite ideal in $P^+$ and $V\in U$-int, let $O_\Lambda V$ be the largest submodule $V'$ of $V$ such that $^\lambda V'\neq0$, $\lambda\in P^+$ implies $\lambda\in \Lambda$.
Put $A(\Lambda):=O_\Lambda A$, we know that $(A(\Lambda),P^+,A)$ is a triple which satisfies Woodcock's condition.
\end{example}

\begin{remark}\cite{14}
Let $\Lambda$ be a finite ideal of $P^+$. Denote $A_R(\Lambda)=R\otimes_\mathscr{A} A(\Lambda)$ and $A_R=R\otimes_\mathscr{A} A$, where
$A(\Lambda)$ is  defined as above. Moreover, let $\lambda$ be the maximal one in
$\Lambda$ and $\Gamma=\Lambda\setminus\{\lambda\}$, then we have a short exact sequence in $U$-bimodule
\begin{eqnarray}\label{exact1}
0\rightarrow A_R(\Gamma)\rightarrow A_R(\Lambda)\rightarrow \triangledown_R(\lambda)\otimes \triangledown_R(\lambda)'\rightarrow 0,
\end{eqnarray}
which maps those global basis elements in $A_R(\Lambda)\setminus A_R(\Gamma)$ bijectively onto the standard global basis of $\triangledown(\lambda)\otimes\triangledown(\lambda)'$.

Therefore, the triple $(A_R(\lambda),P^+,A_R)$ satisfies the Woodcock condition defined above, where the ground ring $R$ has a ring morphism $\mathbb{Z}[q,q^{-1}]\rightarrow R$.
\end{remark}

\begin{definition}\label{dmodule}\cite{13}
For $\mu\in W\cdot\lambda$ with $\lambda\in P^+$, $W$ is the weyl group associated with root datum of $U$.
Let $\nu_\mu$ be the element of global basis of $\triangle(\lambda)$ with weight $\mu$. If $w\in W$ with $m=<\alpha^\vee,w\lambda>\geq0$, we have
\begin{eqnarray*}
\nu_{s_iw\lambda}&=&f^{(m)}_i\nu_{w\lambda},
\qquad \qquad\nu_{w\lambda}=e_i^{(m)}\nu_{s_iw\lambda}.
\end{eqnarray*}
Put $\triangle^\#(\mu):=U^\#\cdot\nu_\mu$. They are the \emph{Demazure modules} associated to $\mu$.
\end{definition}

\begin{example}\cite{14}
Dualizing short exact sequence (\ref{exact1}) and induction on $|\Lambda|$, we give an algebra epimorphism $U_R\rightarrow S_R$, $u\mapsto u\cdot 1$. Write $S^\mathfrak{b}_R$ for the image of $U^\mathfrak{b}_R$ under this map.

The Borel Schur algebras still fit the framework of Woodcock's condition:

 Let $\Xi\leq P$ be  a finite ideal for the \emph{antipodal excellent order}, then $W\Xi\cap P^+$ is a finite ideal in $P^+$. Thus we may choose $W\Xi\cap P^+$ to be the ideal $\Lambda$.
Let $F_\Xi: {}_{S^b_R}{\bf mod}\rightarrow {}_{S^b_R}{\bf mod}$ take $V$ to its largest quotient with weight in $\Xi$, a right exact functor.

Put $S^\mathfrak{b}_R(\Xi)=F_\Xi S^\mathfrak{b}_R$. By the
alternative version in \cite{18},
 if $\mu$ is maximal in $\Xi$ for the antipodal excellent order, then there is a short exact sequence of bimodules
 \begin{eqnarray}
 0\rightarrow\triangle^\mathfrak{b}_R(\mu)\otimes\bar{\triangle}^{\#}_R(\mu)^\tau\rightarrow S^\mathfrak{b}_R(\Xi)\rightarrow S^\mathfrak{b}_R(\Xi\setminus\{\mu\})\rightarrow0.
 \end{eqnarray}

By induction on $|\Xi|$ it now can be shown that $S^\mathfrak{b}_R$ is finitely generated free over $R$. Therefore, the $A^\mathfrak{b}_R(\Xi):=(S^\mathfrak{b}_R(\Xi))^*$ forms a filtered system of $R$-coalgebras.
Write $C^\mathfrak{b}_R$ as the colimit of this system, which is a free $R$-coalgebra.

 Suppose one takes $P$ for both $P$ and $P^+$, $\preceq$ for order $\leq$, and defines the maps $A^\mathfrak{b}_R\rightarrow \coprod_{\lambda\in P}R(\lambda)$ by using the idempotent $\varepsilon_\mu$. Then the triple $(A^\mathfrak{b}_R, P, C^\mathfrak{b}_R)$ satisfies the Woodcock's condition. It implies that the roles of the $\triangle(\mu)$ and $\triangle'(\mu)$
are respectively played by $\triangle^\mathfrak{b}_R(\mu)$ and $\bar{\triangle}^{\#}_R(\mu)'$.
\end{example}

The triple $(A(\Lambda),P^+,C)$ with Woodcock condition have some
very interesting homological consequences \cite{14}. One of them is
called \emph{Ext-reciprocity}. For $V\in S(\Lambda)\text{-Mod}$,
$X\in R\text{-Mod}$, and $\mu\in P$, we have
\begin{eqnarray}\label{ext}
Ext^i_{S(\Lambda)}(S(\Lambda)^\mu\otimes X, V)\cong Ext^i_R(X, ^\mu V)\qquad  \forall i \geq 0,\\
Ext^i_{S(\Lambda)}(V,A(\Lambda)^\mu\otimes X)\cong Ext^i_R(^\mu V,X)\qquad  \forall i \geq 0.
\end{eqnarray}

A $q$-analogue Kempf's vanishing theorem was established by some
properties of the crystal basis proved by Kashiwara in order to
obtain the refined Demazure character formula in \cite{13}\cite{22}.

Woodcock used that the ideal of using the properties of
the cystal basis to obtain a quantized Kempf's vanishing theorem
also works for Schur algebra's version.
Let us recall the {\em{Kempf's vanishing theorem}} in Woodcock \cite{14}:

For any $R$-module $X$ and $\mu\in P$, note that $\mu^+$ as the unique dominant
weight in orbit $W\mu$. We have
\begin{eqnarray*}\label{vanishing}
Ext^i_{S^b_R}(S_R,\triangledown^\mathfrak{b}_R(\mu)\otimes X)&\cong& Ext^i_R(^\mu S_R, X)\cong Ext^i_R(\triangle_R(\mu^+)^\tau, X)\\
&\cong&\left\{
\begin{array}{c}
\triangle_R(\mu^+)\otimes X \qquad if\  i=0\\
0  \qquad \qquad \qquad \quad if\   i>0
\end{array}
\right.
\end{eqnarray*}

\begin{remark}
The case $\mu=\mu^+$ of \ref{vanishing} is Kempf's vanishing theorem in \cite{22},
 since $H^i(U_R/U^\mathfrak{b}_R)\cong Ext^i_{S^{b}_R(W\Lambda)}(S_R(W\Lambda),\triangle(\lambda))$ for all $i\geq0$.
If $\Lambda$ is a finite ideal in $P^+$, and $V\in {}_{S^b_R(W\Lambda)}\bf{mod}$.

Moreover, in this situation, we have $\triangledown^\mathfrak{b}_R(\mu)= L_\mu$, and $S^{\mathfrak{b}}_R=S^-_R$. Thanks to the definition of Demazure modules in \ref{dmodule},
which means the module
$L_\lambda$ is $Hom_{S^-_R}(S_R,-)$-acyclic.
\end{remark}

\begin{theorem}\label{main1}
 For $\lambda\in \Lambda^+(n,r)$, the complex $ S_R(n,r)\otimes_{S^+_R(n,r)} \mathbb{C}_*^+(L_\lambda)$ is a projective resolution of $W^L_\lambda:=S_R(n,r)\otimes_{S^+_R(n,r)}L_\lambda$ over $S_R(n,r)$.
\end{theorem}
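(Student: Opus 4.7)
Since $\mathbb{C}_*^+(L_\lambda)$ is a projective resolution of $L_\lambda$ over $S^+_R(n,r)$ by Proposition \ref{finitel}, and the induction functor $F^+ := S_R(n,r)\otimes_{S^+_R(n,r)} -$ sends each projective generator $S^+_R(n,r)\psi_\mu$ to $S_R(n,r)\psi_\mu$ (a direct summand of the free module $S_R(n,r)$), the complex $F^+(\mathbb{C}_*^+(L_\lambda))$ consists of projective $S_R(n,r)$-modules with $H_0 = W^L_\lambda$ by right-exactness of $F^+$. The theorem therefore reduces to showing that $L_\lambda$ is $F^+$-acyclic, i.e.\ $\Tor_i^{S^+_R(n,r)}(S_R(n,r), L_\lambda) = 0$ for all $i>0$.

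To establish this Tor vanishing I would exploit contravariant duality relating the two Borel halves $S^\pm_R(n,r)$. Applying the argument of Lemma \ref{acyclic} with $S^+_R(n,r)$ and $S^-_R(n,r)$ interchanged (which is legitimate because the anti-automorphism $\xi$ swaps them) produces a natural isomorphism
\[ S_R(n,r) \otimes_{S^+_R(n,r)} V^\varoast \;\cong\; \Hom_{S^-_R(n,r)}(S_R(n,r), V)^\varoast. \]
All modules appearing in $\mathbb{C}_*^+(L_\lambda)$ are finitely generated free over $R$, so the contravariant dual $\mathcal{J}$ is exact on the resolution, and by Hom-tensor adjunction (using that $S_R(n,r)$ is itself $R$-finitely generated free) it converts projective $S^+_R(n,r)$-modules into injective $S^-_R(n,r)$-modules. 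Hence $\mathbb{C}_*^+(L_\lambda)^\varoast$ is an injective coresolution of $L_\lambda^\varoast$; since $\xi(\psi_{\lambda\lambda}^1) = \psi_{\lambda\lambda}^1$ gives the self-duality $L_\lambda^\varoast \cong L_\lambda$, the displayed isomorphism derives to
\[ \Tor_i^{S^+_R(n,r)}(S_R(n,r), L_\lambda) \;\cong\; \Ext_{S^-_R(n,r)}^i(S_R(n,r), L_\lambda)^\varoast \qquad (i \ge 0). \]
For dominant $\lambda$ the right-hand side vanishes in positive degrees by the Kempf vanishing statement recorded in the remark preceding the theorem, which completes the proof.

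The main technical obstacle is justifying the derived version of the displayed isomorphism, specifically that $\mathcal{J}$ really does send projective $S^+_R(n,r)$-resolutions of $R$-free modules to injective $S^-_R(n,r)$-coresolutions. This parallels the dualities computed in Appendix 7 of \cite{3} and amounts to a careful bookkeeping with Hom-tensor adjunction and linear duality. Once it is in place, the theorem follows immediately by combining Proposition \ref{finitel} with the Kempf vanishing statement through the chain of natural isomorphisms above.
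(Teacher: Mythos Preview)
Your overall strategy—translate the induction functor through the contravariant duality $\mathcal{J}$ and then invoke Kempf vanishing for $L_\lambda$—is exactly the mechanism the paper uses. The difference is that you attempt to run the argument directly over an arbitrary base ring $R$, whereas the paper first reduces to the case where the ground ring is a field by a Universal Coefficient argument.

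The step you yourself single out as ``the main technical obstacle'' is in fact a genuine gap: over a general commutative ring $R$ the functor $\mathcal{J}$ does \emph{not} carry finitely generated projective $S^+_R(n,r)$-modules to injective $S^-_R(n,r)$-modules. Concretely, $\mathcal{J}(S^+_R\psi_\mu)$ is a direct summand of $\Hom_R(S^-_R,R)$, and the adjunction
\[
\Hom_{S^-_R}\bigl(M,\Hom_R(S^-_R,R)\bigr)\;\cong\;\Hom_R(M,R)
\]
shows that $\Hom_R(S^-_R,R)$ is injective over $S^-_R$ only when $R$ is self-injective. For $R=\mathbb{Z}$ (or $\mathbb{Z}[q,q^{-1}]$) no nonzero finitely generated module is injective, so $\mathcal{J}\bigl(\mathbb{C}_*^+(L_\lambda)\bigr)$ is not an injective coresolution and cannot be used to compute $\Ext^*_{S^-_R}(S_R,L_\lambda)$; your derived isomorphism $\Tor_i\cong(\Ext^i)^\varoast$ is therefore unjustified in the generality claimed. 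The reference to Appendix~7 of \cite{3} does not rescue this: that appendix establishes the underived natural isomorphism of Lemma~\ref{acyclic}, not the passage to derived functors over a non-self-injective base.

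The paper avoids this obstruction by noting that every term of $\mathbb{B}(R)$ is $R$-free, so the Universal Coefficient Theorem reduces acyclicity to the cases $R=\mathbb{Q}$ and $R=\bar{\mathbb{F}}_p$. Over a field the duality $\mathcal{J}$ \emph{does} send finite-dimensional projectives to injectives, and from that point on your argument and the paper's coincide. In short, your proof becomes correct once you insert the UCT reduction to fields before invoking the injective-coresolution step.
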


\begin{proof}
 Given $\lambda\in \Lambda^+(n,r)$,  denote the complex $\mathbb{B}(R):=S_R\otimes_{S^+_R} \mathbb{C}_*^+(L_\lambda)$.
Since  the $S_R$-module isomorphism holds:
\begin{eqnarray*}
\mathbb{B}(R) \cong\bigoplus\limits_{\mu^{(1)}\rhd\cdots\rhd\mu^{(k)}\rhd\lambda}S_R\psi_{\mu^{(1)}}\otimes_R\psi_{\mu^{(1)}}J_1\psi_{\mu^{(2)}}\otimes_R\cdots\otimes_R\psi_{\mu^{(k)}}J_1\psi_\lambda,
\end{eqnarray*}
and $S_R\psi_{\mu^{(1)}}$, $\psi_{\mu^{(i)}}J_1\psi_{\mu^{(i+1)}}$ with $1\leq i\leq k$ and $\psi_{\mu^{(k)}}J_1\psi_\lambda$ are all free $R$-module by Proposition \ref{iso},   it follows that all factors in $\mathbb{B}(R)$ are free $R$-modules.

The short exact sequence
$$0\rightarrow H_i(\mathbb{B}(\mathbb{Z}))\otimes R\rightarrow H_i(\mathbb{B}(R))\rightarrow Tor^\mathbb{Z}_1(H_{i-1}(\mathbb{B}(\mathbb{Z})), R)\rightarrow 0$$
follows from  the Universal Coefficient Theorem over the complex
$\mathbb{B}(\mathbb{Z})\otimes_\mathbb{Z}R\cong \mathbb{B}(R)$.

Hence, in order to show that the complex $\mathbb{B}(R)$ are acyclic, it is enough to prove the claim that
the complex $\mathbb{B}(\mathbb{Z})$ is acyclic.

We already know that $H_i(\mathbb{B}(\mathbb{Z}))$ is a finitely generated abelian group. Therefore
we can write $H_i(\mathbb{B}(\mathbb{Z}))$ as $\mathbb{Z}^\alpha\bigoplus\limits_{p\text{ is a }prime}\bigoplus\limits_{s\geq1}(\mathbb{Z}/p^s\mathbb{Z})^{\alpha_{ps}}$,
where only finitely many of the integer $\alpha, \alpha_{ps}$ are different from zero. For every prime $p$ denote by $\bar{\mathbb{F}}_p$ the algebraic closure $\mathbb{F}_p$.
We get
$
H_i(\mathbb{B}(\mathbb{Z}))\otimes_\mathbb{Z}\bar{\mathbb{F}}_p\cong \bar{\mathbb{F}}_p^{\sum_{s\geq1}\alpha_{ps}},
$
and also
$
H_k(\mathbb{B}(\mathbb{Z}))\otimes_\mathbb{Z}\mathbb{Q}\cong \mathbb{Q}^\alpha.
$

Thus, in order to prove the above claim, we only need to show that $H_i(\mathbb{B}(\mathbb{Z}))\otimes_\mathbb{Z}\mathbb{Q}=0$ and
$H_i(\mathbb{B}(\mathbb{Z}))\otimes_\mathbb{Z}\bar{\mathbb{F}}_p=0$
for any prime number $p$.

Let $\mathbb{L}$ denote either $\mathbb{Q}$ or the field $\bar{\mathbb{F}}_p$ for a
prime $p$. Then, $H_i(\mathbb{B}(\mathbb{Z}))\otimes_\mathbb{Z}\mathbb{L}$ is
a submodule of $H_i(\mathbb{B}(\mathbb{L}))$ due to the Universal Coefficient Theorem. Hence, in order to prove $H_i(\mathbb{B}(\mathbb{Z})\otimes_\mathbb{Z}\mathbb{L}) = 0$, it is enough
to show that $H_i(\mathbb{B}(\mathbb{L}))=0$.

Now we  prove that $H_i(\mathbb{B})(\mathbb{L})=0$.

First, the algebra
$S_\mathbb{L}(n,r)$ has an anti-involution $\xi:
S_\mathbb{L}(n,r)\rightarrow S_\mathbb{L}(n,r)$ defined on the basis
elements by $\varphi_{\lambda\mu}^d\mapsto\varphi_{\mu\lambda}^{d^{-1}}$. The
image of $S^+_\mathbb{L}(n,r)$ under $\xi$ is the
subalgebra $S^-_\mathbb{L}(n,r)$ of $S_\mathbb{L}(n,r)$.

As in Definition \ref{involution}, this homomorphism $\xi$ induces a
contravariant equivarience of categories $\mathcal {J}:$
$_{S^+_L(n,r)}\text{\bf mod}\rightarrow$
$_{S^-_L(n,r)}\text{\bf mod}$.

By Lemma \ref{acyclic}, the following exact functors are isomorphic:
\begin{eqnarray*}
\mathcal {J}\circ \text{Hom}_{S^-_L(n,r)}(S_\mathbb{L},-)\circ \mathcal {J}: {}_{S^+_L(n,r)}{\bf mod}\rightarrow {}_{S_L(n,r)}{\bf mod}\\
S_\mathbb{L}(n,r)\otimes_{S^+_L(n,r)}-: {}_{S^+_L(n,r)}{\bf mod}\rightarrow {}_{S_L(n,r)}{\bf mod}.
\end{eqnarray*}

Second, we recall that $\mathbb{B}(\mathbb{L}) = S_\mathbb{L}\otimes_{S^+_L} \mathbb{C}_*^+(L_\lambda)\cong \mathcal{J}(\text{Hom}_{S^-_L}(S_L,\mathcal {J}(B_*(L_\lambda))))$.
Furthermore, by \emph{Kempf's vanishing theorem} we already know that the module
$L_\lambda$ is $\text{Hom}_{S^-_L}(S_\mathbb{L},-)$-acyclic. Thus, after
applying $\mathcal {J}$, we get that the complex $\mathbb{B}(\mathbb{L})$ is exact.
\end{proof}

\section{\bf The Boltje-Maisch complex}

Suppose that $n\geq r$, then we know that there is a partition
$\delta:=(1,\ldots,1,0,\ldots,0)\in\Lambda(n,r)$. Then there is a
obvious isomorphism of algebras $\phi:\mathcal {H}_r\cong
\psi_\delta S_R(n,r)\psi_\delta$. Therefore we can tell that if $M$ is an
$S_R(n,r)$-module, the $\psi_\delta
S_R(n,r)\psi_\delta$-module $\psi_\delta M$ induced by $\phi$ which is a $R\mathcal {H}_r$-module too.
In fact the map $M\mapsto \psi_\delta M$ is functorial, and it defines a functor
$\mathscr{S}_r:{}_{S_R(n,r)}{\bf mod}$$\rightarrow {}_{R\mathcal{H}_r}{\bf mod}$
 which was named \emph{Schur functor} in \cite{23}.

In this section we show that for any $\lambda\in\Lambda^+(n,r)$ the
complex $\mathscr{S}_r(S_R\otimes_{S^+_R}\mathbb{C}_*^+(L_\lambda))$ is
isomorphic  to the complex which has been constructed in \cite{22}. Here, we call it
{\em{Boltje-Maisch complex}}.

First of all, we start with some notations and conventions.
\begin{definition}\cite{24}
For any $\lambda,\mu\in\Lambda(n,r)$,
there is a $R$-submodule of
$Hom_\mathcal{H}(M^\mu,M^\lambda)$. One can denote it by $Hom_{\mathcal{H}_r}^\wedge(M^\mu,M^\lambda)$,
which is a free $R$-module in the following form:
\begin{eqnarray}
\text{Hom}^\wedge_{\mathcal{H}_r}(M^\mu, M^\lambda):=\bigoplus\limits_{d\in \Omega_{\lambda\mu}^{\succeq}}R\psi_{\lambda\mu}^d.
\end{eqnarray}
\end{definition}

Since $\psi_\lambda S^+_R(n,r)\psi_\mu=\psi_{\lambda\lambda}^1\bigoplus\limits_{\substack{d\in \Omega_{\theta,\eta}^{\succeq}\\
\theta,\eta\in\Lambda(n,r)}}R\psi_{\theta\eta}^d\psi_{\mu\mu}^1=\bigoplus\limits_{d\in \Omega_{\lambda\mu}^{\succeq}}R\psi_{\lambda\mu}^d$,
 we know that $\text{Hom}^\wedge_{\mathcal{H}_r}(M^\mu, M^\lambda)$ equals $\psi_\lambda S^+_R(n,r)\psi_\mu$.
 Moreover, for $S^+_R(n,r)=J_1\oplus R_\Lambda$, we have $\text{Hom}^\wedge_{\mathcal{H}_r}(M^\mu,M^\lambda)=\psi_\lambda J_1\psi_\mu$ if $\lambda\rhd\mu$.

Boltje and Maisch defined a complex $\widetilde{B}^\lambda_*$ in Section {3.1} of \cite{21},
as following:

For some $\lambda\in \Lambda^+(n,r)$,
$\widetilde{B}^\lambda_{-1}$ is the dual Specht module that relates to $\lambda$ and $\widetilde{B}^\lambda_0$ defined as $\text{Hom}_R(M^\lambda,R)$. When $k\geq1$,
$\widetilde{B}^\lambda_k$ is defined as the direct sum over all sequence
$(\mu^{(1)}\rhd\cdots\rhd\mu^{(k)}\rhd\lambda)$
 as
\begin{eqnarray*}\label{summ}
\bigoplus\limits_{\substack{\mu^{(1)}\rhd\cdots\rhd\mu^{(k)}\rhd\lambda \\ \mu^{(1)},\ldots,\mu^{(k)}\in \Lambda^+(n,r)} }\text{Hom}_R(M^{\mu^{(1)}},R)\otimes_R \text{Hom}_{\mathcal{H}_r}^\wedge(M^{\mu^{(2)}},M^{\mu^{(1)}})\otimes_R\cdots\otimes_R \text{Hom}_{\mathcal{H}_r}^\wedge(M^\lambda,M^{\mu^{(k)}}).
\end{eqnarray*}

The differential $d_k$, $k\geq1$, in $\widetilde{B}^\lambda_*$ is given by the formula
\begin{eqnarray}
d_k(f_0\otimes f_1\otimes\cdots\otimes f_k)=\sum^{k-1}_{t=0}(-1)^tf_0\otimes\cdots\otimes f_t\circ f_{t+1}\otimes\cdots\otimes f_k.
\end{eqnarray}
and when $k=0$, we put
\begin{eqnarray}
d_0^\lambda:\widetilde{B}_0^\lambda=\text{Hom}_R(M^\lambda,R)\rightarrow \text{Hom}_R(S^\lambda,R)=\widetilde{B}^\lambda_{-1}, \qquad \varepsilon\mapsto \varepsilon|_{S^\lambda}.
\end{eqnarray}

and finally obtain a chain complex with only finite no trivial terms:

\begin{eqnarray}
\widetilde{B}^\lambda_*:\qquad 0\rightarrow
\widetilde{B}^\lambda_{f(\lambda)}\xrightarrow{d^\lambda_{f(\lambda)}}\widetilde{B}^\lambda_{f(\lambda)-1}\xrightarrow{d^\lambda_{f(\lambda)-1}}\cdots\xrightarrow{d^\lambda_1}\widetilde{B}^\lambda_0\xrightarrow{d^\lambda_0}\widetilde{B}^\lambda_{-1}\rightarrow 0,
\end{eqnarray}
where $f(\lambda)$ is a positive integer by Proposition \ref{finitel}.

According to Theorem \textbf{4.2} and \textbf{4.4} in \cite{24}, we have:
\begin{lemma}\cite{21}\label{lemmanew}
  $\widetilde{B}^\lambda_*$ are exact in degree $0$ and $-1$.
\end{lemma}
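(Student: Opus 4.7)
The plan is to derive both exactness claims from two structural facts about the Specht module $S^\lambda$ inside the permutation module $M^\lambda$: first, that the inclusion $S^\lambda \hookrightarrow M^\lambda$ is $R$-pure (equivalently, that $M^\lambda/S^\lambda$ is $R$-free, so the inclusion $R$-splits); and second, that $S^\lambda$ admits the kernel description
\[
S^\lambda \;=\; \bigcap_{\mu \rhd \lambda}\; \bigcap_{\psi \in \text{Hom}^\wedge_{\mathcal{H}_r}(M^\lambda,\, M^\mu)} \ker(\psi).
\]
These are the content of Theorems 4.2 and 4.4 of \cite{24} in the symmetric-group setting and are lifted to the Iwahori-Hecke setting in \cite{21}. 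Granting them, the two exactness assertions become short homological consequences.

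For exactness at degree $-1$, I would apply $\text{Hom}_R(-, R)$ to the short exact sequence $0 \to S^\lambda \to M^\lambda \to M^\lambda/S^\lambda \to 0$. Because purity of $S^\lambda$ in $M^\lambda$ forces $\text{Ext}^1_R(M^\lambda/S^\lambda, R) = 0$, the functor $\text{Hom}_R(-, R)$ remains right-exact on this sequence, and the resulting surjection is precisely $d^\lambda_0$.

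For exactness at degree $0$, the inclusion $\text{im}(d^\lambda_1) \subseteq \ker(d^\lambda_0)$ is immediate from the kernel characterization, since every composite $\varphi \circ \psi$ with $\psi \in \text{Hom}^\wedge_{\mathcal{H}_r}(M^\lambda, M^\mu)$ and $\mu \rhd \lambda$ annihilates $S^\lambda$. For the converse, $\ker(d^\lambda_0)$ consists of the $R$-linear $\varepsilon : M^\lambda \to R$ that factor through $M^\lambda/S^\lambda$. The kernel characterization supplies an injection
\[
M^\lambda/S^\lambda \;\hookrightarrow\; \bigoplus_{\mu \rhd \lambda,\; \psi \in \text{Hom}^\wedge_{\mathcal{H}_r}(M^\lambda, M^\mu)} M^\mu,
\]
and a second purity step (for $M^\lambda/S^\lambda$ inside this direct sum, which in the Hecke case follows from the compatibility of Murphy-type bases) makes the $R$-dual of this inclusion surjective. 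Thus any $\varepsilon \in \ker(d^\lambda_0)$ lifts to a finite sum $\sum_i \varphi_i \circ \psi_i$, which is exactly $\text{im}(d^\lambda_1)$.

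The main obstacle is the kernel characterization of $S^\lambda$ in the Hecke setting, which is the $q$-deformation of James' Garnir-relation description of the Specht module. Establishing it requires replacing Young's basis of $M^\lambda$ by the Murphy basis and the classical Garnir identities by their $q$-analogues; this is where the genuine work of \cite{21} lies, and the compatible purity statements fall out of the resulting basis analysis. Once the kernel characterization and the purity are imported from \cite{21}, the two exactness claims follow by the formal homological arguments above.
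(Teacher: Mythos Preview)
The paper does not actually supply a proof of this lemma: it is stated with the citation \cite{21} and the sentence ``According to Theorem 4.2 and 4.4 in \cite{24}'', and no argument is given. So there is nothing in the paper to compare your proposal against beyond the bare reference.

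Your sketch is essentially a reconstruction of the argument one finds in those references: surjectivity of $d_0^\lambda$ from the $R$-purity of $S^\lambda\subseteq M^\lambda$, and exactness at degree $0$ from the kernel description of $S^\lambda$ together with a purity/freeness statement for $M^\lambda/S^\lambda$ inside the direct sum of the $M^\mu$. That is indeed the shape of the Boltje--Hartmann/Boltje--Maisch proofs, and you correctly identify that the substantive work (the $q$-analogue of the kernel intersection theorem and the compatible basis statements) is imported from \cite{21} rather than carried out here. One small point worth tightening: in $\widetilde{B}^\lambda_1$ the index $\mu^{(1)}$ ranges only over $\Lambda^+(n,r)$, i.e.\ partitions, so your kernel description and the ensuing embedding should be phrased accordingly (this is harmless since $M^\mu$ depends only on the underlying partition of $\mu$, but it should be said). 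With that adjustment, your proposal is a faithful expansion of what the paper merely cites.
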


\begin{theorem}\label{main2}
For $\lambda\in \Lambda^+(n,r)$, the complex $\widetilde{B}_*^\lambda$ is isomorphic to the complex $$\mathscr{S}_r(S_R(n,r)\otimes_{S^+_R(n,r)} \mathbb{C}_*^+(L_\lambda)).$$
\end{theorem}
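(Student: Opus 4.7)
My plan is to compare the two complexes term by term using Proposition \ref{simple} and the explicit description of $\mathscr{S}_r$ as left-multiplication by the idempotent $\psi_\delta$, then check the differentials match.

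First I would apply Proposition \ref{simple} together with the right-exactness of $S_R(n,r)\otimes_{S^+_R(n,r)}-$ and the freeness of each summand $S^+_R(n,r)\psi_{\mu^{(1)}}$ to obtain, for $k\geq 1$,
\begin{eqnarray*}
S_R\otimes_{S^+_R}\mathbb{C}^+_k(L_\lambda)\cong\bigoplus_{\mu^{(1)}\rhd\cdots\rhd\mu^{(k)}\rhd\lambda}S_R\psi_{\mu^{(1)}}\otimes_R\psi_{\mu^{(1)}}J_1\psi_{\mu^{(2)}}\otimes_R\cdots\otimes_R\psi_{\mu^{(k)}}J_1\psi_\lambda.
\end{eqnarray*}
Left-multiplying by $\psi_\delta$ to apply the Schur functor affects only the leftmost factor, turning it into $\psi_\delta S_R\psi_{\mu^{(1)}}$.

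Next I would establish the two natural identifications needed to match $\widetilde{B}^\lambda_k$. For the inner factors, the observation just before Lemma \ref{lemmanew} already gives $\psi_\mu J_1\psi_\nu=\mathrm{Hom}^\wedge_{\mathcal{H}_r}(M^\nu,M^\mu)$ when $\mu\rhd\nu$. For the leftmost factor, I would use that $\delta=(1,\ldots,1,0,\ldots,0)$ has trivial Young subgroup, so $x_\delta=1$ and $M^\delta\cong\mathcal{H}_r$, giving
\begin{eqnarray*}
\psi_\delta S_R\psi_\mu\cong\mathrm{Hom}_{\mathcal{H}_r}(M^\mu,M^\delta)\cong\mathrm{Hom}_{\mathcal{H}_r}(M^\mu,\mathcal{H}_r),
\end{eqnarray*}
and then invoke the standard Hecke-algebra duality $\mathrm{Hom}_{\mathcal{H}_r}(M^\mu,\mathcal{H}_r)\cong\mathrm{Hom}_R(M^\mu,R)$ of left $\mathcal{H}_r$-modules (a form of Dipper--James contragredience). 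This identifies $\mathscr{S}_r\bigl(S_R\otimes_{S^+_R}\mathbb{C}^+_k(L_\lambda)\bigr)$ with $\widetilde{B}^\lambda_k$ for every $k\geq 0$.

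For the differentials I would observe that under the identifications above the multiplication in $S^+_R$ on adjacent factors $\psi_\mu S^+_R\psi_\nu\otimes\psi_\nu S^+_R\psi_\theta\to\psi_\mu S^+_R\psi_\theta$ translates to composition of $\mathcal{H}_r$-homomorphisms, so each $d_{k,t}$ for $0\leq t\leq k-1$ corresponds to the term $f_0\otimes\cdots\otimes f_t\circ f_{t+1}\otimes\cdots\otimes f_k$ of the Boltje--Maisch differential. The last boundary term $d_{k,k}$ vanishes because the generator of $L_\lambda$ is annihilated by $J_1$ (recall $L_\lambda$ is inflated along $S^+_R\twoheadrightarrow R_\Lambda$), so the sum runs only from $t=0$ to $k-1$, matching the Boltje--Maisch formula exactly.

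For the degree $-1$ term I would identify $\mathscr{S}_r(W^L_\lambda)=\psi_\delta(S_R\otimes_{S^+_R}L_\lambda)$ with $\mathrm{Hom}_R(S^\lambda,R)$ and check that the composed map from $\mathrm{Hom}_R(M^\lambda,R)\cong\psi_\delta S_R\psi_\lambda$ is the restriction $\varepsilon\mapsto\varepsilon|_{S^\lambda}$. Here I would use that $W^L_\lambda$ is an induced standard module, so by the $q$-analogue of the classical fact that the Schur functor sends Weyl modules to Specht modules (combined with the contravariant dual functor $\mathcal{J}$ of Definition \ref{involution}), one gets $\mathscr{S}_r(W^L_\lambda)\cong\mathrm{Hom}_R(S^\lambda,R)$ naturally, and the canonical map $S_R\psi_\lambda\twoheadrightarrow W^L_\lambda$ corresponds after dualizing to the inclusion $S^\lambda\hookrightarrow M^\lambda$. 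I expect this last step to be the main obstacle: pinning down the natural isomorphism of the Schur-image of the induced module with the dual Specht module, and verifying that it intertwines the concrete $d_0$ of Boltje--Maisch with the connecting map coming from the bar resolution, requires a careful use of the Hecke contragredient duality. Once in place, assembling the degree-wise isomorphisms into a chain-map isomorphism is routine.
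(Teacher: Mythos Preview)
Your plan for the non-negative degrees is essentially the paper's own argument: decompose via Proposition~\ref{simple}, apply $\psi_\delta$ to the left factor, identify $\psi_\delta S_R\psi_\mu\cong\mathrm{Hom}_{\mathcal H_r}(M^\mu,\mathcal H_r)\cong\mathrm{Hom}_R(M^\mu,R)$ and $\psi_\mu J_1\psi_\nu\cong\mathrm{Hom}^\wedge_{\mathcal H_r}(M^\nu,M^\mu)$, and observe that multiplication corresponds to composition so the differentials match (including your correct remark that $d_{k,k}$ vanishes because $J_1$ kills $L_\lambda$). One minor difference: where you invoke the duality $\mathrm{Hom}_{\mathcal H_r}(M^\nu,\mathcal H_r)\cong\mathrm{Hom}_R(M^\nu,R)$ as ``standard'', the paper writes it down explicitly as $\phi_\nu(f)(m)=\text{coefficient of }T_{\mathrm{id}}\text{ in }f(m)$ and checks injectivity by induction on length.

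The real divergence is in degree $-1$, precisely where you anticipate trouble. The paper bypasses your ``main obstacle'' entirely: rather than identifying $\mathscr S_r(W^L_\lambda)$ with $\mathrm{Hom}_R(S^\lambda,R)$ directly and chasing the map $d_0$, it observes that $\widehat B^\lambda_*:=\mathscr S_r(S_R\otimes_{S^+_R}\mathbb C^+_*(L_\lambda))$ is exact everywhere (Theorem~\ref{main1} plus exactness of $\mathscr S_r$), while $\widetilde B^\lambda_*$ is already known to be exact in degrees $0$ and $-1$ by Lemma~\ref{lemmanew}. Once you have a chain isomorphism in degrees $\geq 0$, both $\widetilde B^\lambda_{-1}$ and $\widehat B^\lambda_{-1}$ are the cokernel of (the same) $d_1$, so the isomorphism in degree $-1$ is forced. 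This is cheaper than the direct identification you propose, and avoids any delicate compatibility check between the Hecke contragredient and the bar-resolution augmentation.
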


\begin{proof}
For convenience, we denote the complex $\mathscr{S}_r(S_R(n,r)\otimes_{S^+_R(n,r)} \mathbb{C}_*^+(L_\lambda))$ by $\widehat{B}^\lambda_*$.

 Since the complex $\widehat{B}^\lambda_*$ is
exact, and the exactness of $\widetilde{B}^\lambda_*$ in degree
$0$ and $-1$ has been treated in Lemma \ref{lemmanew}, we only need to establish the isomorphism  in the non-negative
degrees. The isomorphism in the degree $-1$ will follow.

With the consequence of Proposition \ref{simple}, we can write the factor $\widehat{B}^\lambda_k$ as a direct sum of
\begin{eqnarray}\label{note}
\psi_\delta S_R(n,r)\psi_{\mu^{(1)}}\otimes_R \psi_{\mu^{(1)}}J_1\psi_{\mu^{(2)}}\otimes_R \cdots\otimes_R\psi_{\mu^{(k)}}J_1\psi_\lambda,
\end{eqnarray}
where subscripts satisfy $\mu^{(1)}\rhd\cdots\rhd\mu^{(k)}\rhd\lambda$. Furthermore, it is straightforward that the summand (\ref{note}) is isomorphic to
\begin{eqnarray*}
  \text{Hom}_{\mathcal {H}_r}(M^{\mu^{(1)}},\mathcal {H}_r)\otimes_R \text{Hom}_{\mathcal {H}_r}^\wedge(M^{\mu^{(2)}},M^{\mu^{(1)}})\otimes_R\cdots\otimes_R \text{Hom}_{\mathcal {H}_r}^\wedge(M^\lambda,M^{\mu^{(k)}}),
  \end{eqnarray*}

 First, to show the correspondence of factors between $\widehat{B}^\lambda_k$ and $\widetilde{B}^\lambda_*$ in non-negative degrees, it is enough an isomorphism of $\mathcal{H}_r$-modules $\phi_\nu:\text{Hom}_{\mathcal{H}_r}(M^\nu,\mathcal {H}_r)\rightarrow \text{Hom}_R(M^\nu, R)$, to find for every $\nu\in \Lambda(n,r)$.

With the natural anti-automorphism of $\chi:\mathcal{H}_r\rightarrow \mathcal{H}_r$ and $f\in \text{Hom}_{\mathcal{H}_r}(M^\nu, \mathcal{H}_r)$,
we can define right module structure of $\mathcal{H}_r$ on $\text{Hom}_{\mathcal{H}_r}(M^\nu,\mathcal{H}_r)$ and $\text{Hom}_R(M^\mu, R)$, which is given by the formula $(f\sigma)(m)=f(m\cdot\chi(\sigma))$, where $m\in M^\nu$, and $\sigma\in \mathcal{H}_r$.

 For $f\in \text{Hom}_{\mathcal{H}_r}(M^\nu,\mathcal {H}_r)$ and  $m\in M^\nu$,  define $\phi_\nu(f)(m)$ to be the coefficient of $T_{id}$ in $f(m)\in \mathcal{H}_r$. Note that $\{T_w|w\in \mathfrak{S}_r\}$ is an $R$-basis of $\mathcal{H}_r$, now we claim that $\phi_\nu$ is a homomorphism of $\mathcal{H}_r$-modules.

First, we have that  $\phi_\nu(f)\cdot T_w=\phi_\nu(fT_w)$ because:

(i) $(\phi_\nu(f)\cdot T_w)(m)=\phi_\nu(f)(m T_{w^{-1}})=\text{the coefficient}\  \text{of}\  T_{id}\  \text{in}\  f(m T_{w^{-1}})$,

(ii) $\phi_\nu(fT_w)(m)=\text{the coefficient}\  \text{of}\  T_{id}\  \text{in}\  (f\cdot T_w)(m)=\text{the coefficient}\  \text{of}\  T_{id}\  \text{in}\  f(m T_{w^{-1}}).$

Assume that $\phi_\nu(f)=0$, then the coefficient of
$T_{id}$ in $f(m)$ equals to 0, for any $m\in M^\nu$. By induction on
the length of elements of $\mathfrak{S}_r$, suppose that the
coefficient of $T_w$ in $f(m)$ equals to 0, for any $m\in M^\nu$ and $\ell(w)\leq n$.

Let $w'\in \mathfrak{S}_r$ with $\ell(w')=n+1$.
Using the Mathas's formula in \cite{7}, one can find $s=(i, i+1)\in \mathfrak{S}_r$ for some $1\leq i\leq
r-1$ such that
\begin{eqnarray*}
T_{w'}T_s=qT_{w's}+(q-1)T_{w'}, \qquad and  \qquad  n=\ell(w's)<\ell(w')=n+1.
\end{eqnarray*}
Meantime, $f(m)T_s=f(mT_s)$,  and write $f(m)=\sum\limits_{\ell(w)>n}\alpha_w T_w$ for some $\alpha_w\in R$. Then we have $$0=f(mT_s)=\sum\limits_{\ell(w)>n}\alpha_w T_w \cdot T_s=q\alpha_{w} T_{ws}+\sum\limits_{\substack{\ell(w)\geq n\\w\neq w_0s}}\alpha'_w T_w.$$
It follows that $q\alpha_{w}=0$ then $\alpha_{w} = 0$ since $q\not=0$. Therefore, we find out that $f(m)=0$ for any $m\in M^\nu$ and then $f=0$,
which implies the homomorphism $\phi_\nu$ is injective.

As free $R$-modules, it is obvious that the rank of $\text{Hom}_{\mathcal{H}_r}(M^\nu,\mathcal{H}_r)$ equals to $\text{Hom}_R(M^\nu, R)$, which means the homomorphism $\phi_\nu$ is surjective. It implies that $\phi_\nu$ is an isomorphism.

Lastly, by Definition \ref{bar}, it is straightforward that the isomorphism $\phi_\nu$ indeed induces a chain map from
$\widetilde{B}^\lambda_*$ onto $\widehat{B}^\lambda_*$, which is just the isomorphism from $\widetilde{B}^\lambda_*$ to
$\widehat{B}^\lambda_*$.
\end{proof}

\begin{remark}
With help of the above theorem and Theorem \ref{main1}, we have shown that the Boltje-Maisch complex $\widetilde{B}^\lambda_*$ is exact for $\lambda\in \Lambda^+(n,r)$.
\end{remark}

\noindent
{\bf Acknowledgements: }{\em The authors thank the support from the projects of the National Natural Science Foundation of China (No.11271318 and No.11171296) and the Specialized Research Fund for the Doctoral Program of Higher Education of China (No.20110101110010).  }

\bigskip


\begin{thebibliography}{99}
\bibitem{28} K.Akin, On complexes relating the Jacobi-Trudi identity with the
Bernstein-Gelfand-Gelfand resolution. J. Algebra 117 (1988), 494-503.

\bibitem{30} K.Akin, D.A.Buchsbaum, Characteristic-free representation
theory of the general linear group. Adv. in Math. 58 (1985), 149-200.

\bibitem{31} K.Akin, D.A.Buchsbaum, Characteristic-free representation
theory of the general linear group II. Homological considerations.
Adv. in Math. 72 (1988), 171-210.

\bibitem{24} R.Boltje, R.Hartmann, Permutation resolutions for Specht
modules. J. Algebraic Combin. 34 (2011), 141-162.

\bibitem{21} R.Boltje, F.Maisch, Permutation resolutions for Specht modules of Hecke algebras. J. Algebra 365 (2012), 12-28.

\bibitem{10}R.Dipper, G.James, Representations of Hecke algebras
of general linear groups, Proc. L.M.S (3), \textbf{52} (1986),
20-50.

\bibitem{8}R.Dipper, G.James, A.Mathas, Cyclotomic $q$-Schur algebras, Math. Zeit., 229 (1999),
385-416.

\bibitem{15} S.Donkin, On Schur algebras and related algebras \uppercase\expandafter{\romannumeral1}, J. Algebra \textbf{104}(1986), 310-328.

\bibitem{33} S.R. Doty, Resolutions of B modules. Indag. Mathem. 5(3) (1994), 267-283.

\bibitem{11}J.Du, H.B.Rui, Ariki-Koike Algebras with Semi-simple
Bottoms. Math. Zeit. \textbf{2000}, 204, 807-835.

\bibitem{4}J.Du, B.Parshall and J.-p.Wang, Two-parameter quantum linear groups and the hyperbolic
invariance of q-Schur algebras, J. London Math. Soc. 44 (1991),
420¨C436.

\bibitem{1}J.Du, H.B.Rui, Borel Type Subalgebras of the
$q$-Schur$^{m}$. Journal of Algebra, Volume 213, Issue 2, 15 March
1999, Pages 567-595

\bibitem{9}J.Du, H.B.Rui, Specht modules for Ariki-Koike algebras, Comm. Algebra 29 (2001) 4710-4719.

\bibitem{16} J.Du, L.Scott, Lusztig conjectures, old and new. \uppercase\expandafter{\romannumeral1}, \emph{J. Reine Angew Math}. (1994).

\bibitem{19} J. A. Green, On certain subalgebras of the Schur algebra. J. Algebra \textbf{131} (1990), 265-280.

\bibitem{23} J.A.Green, Polynomial Representations of $GL_n$.
Lecture Notes in Math., vol. 830, Springer-Verlag, Berlin (1980)

\bibitem{13} M.Kashiwara, Crystal base and Littelmann's refined Demazure character formula. Duke Math. J., \textbf{63} (1993), no. 3, 839-858.

\bibitem{26} M. Kashiwara, On crystal bases of the $q$-analogue of universal enveloping algebras, \emph{Duke Math. J.} \textbf{63} (1991), 465-516.

\bibitem{17} G.Lusztig, Introduction to Quantum Groups, Birkh$\ddot{a}$user, Boston/Basel/Berlin(1993).

\bibitem{12}  S. MacLane, Homology, Springer-Verlag, Berlin, 1963. Zentralblatt MATH: 0133.26502.

\bibitem{7} A.Mathas, Iwahori-Hecke Algebras and Schur Algebras of the Symmetric Group.
Univ. Lecture Ser., vol. 15 Amer. Math. Soc. (1999).

\bibitem{22} S.Ryom-Hansen, A $q$-analogue of Kempf's vanishing theorem, Moscow Mathematical Journal
Volume 3, Number 1, Pages 173-187.

\bibitem{3}A.P.Santana, The Schur Algebra $S(B+)$ and Projective Resolutions of Weyl
Modules. Journal of Algebra, Volume 161, Issue 2, November 1993,
 480-504.

\bibitem{2}A.P.Santana, I.Yudin, Characteristic-free resolutions of Weyl and Specht
modules. Advances in Mathematics, Volume 229, Issue 4, 1 March 2012,
2578-2601

\bibitem{18} W.van der Kallen. Longest weight vectors and excellent filtrations, Math. Z. \textbf{201}(1989), 19-31.

\bibitem{32} D.J.Woodcock, Borel Schur algebras. Comm. Algebra 22 (1994), no. 5, 1703-1721.

\bibitem{5} D.J.Woodcock, A Vanishing Theorem for Schur Modules. Journal of Algebra, Volume 165, Issue 2, 3 March 1994, Pages
483-506.

\bibitem{14} D.J.Woodcock, Schur algebras and global bases: New proof of old vanishing theorems, J. Alg. \textbf{191} (1997), 331-370.

\bibitem{20} D.J.Woodcock, Borel Schur algebras, Comm. Algebras \textbf{22}(5)(1994). 1703-1721.

\bibitem{34} I.Yudin, On projective resolutions of simple modules over the Borel
subalgebra $S^+(n,r)$ of the Schur algebra $S(n,r)$ for $n =3$. J. Algebra
319 (2008), 1870-1902.

\bibitem{29} A. V. Zelevinski, Resolvants, dual pairs, and character formulas.
Funct. Analysis and its Appl. 21 (1987), 152-154.
%\bibitem{28} X. Y. Dai, F. Li. Crystal structure, Woodcock condition and Borel-Bott-Weil theorem for $q$-Schur$^m$ algebras, preprint. 2012


\end{thebibliography}
\end{document}